\def\frak{\mathfrak}
\def\Bbb{\mathbb}
\def\Cal{\mathcal}
\newtheorem{prop}[subsection]{Proposition}
\newtheorem*{prop*}{Proposition}
\newtheorem{thm}[subsection]{Theorem}
\newtheorem*{thm*}{Theorem}
\newtheorem{lem}[subsection]{Lemma}
\newtheorem*{lem*}{Lemma}
\newtheorem*{kor*}{Corollary}
\newtheorem{rem}[subsection]{Remark}
\newtheorem{df}[subsection]{Definition}
\newtheorem{ex}[subsection]{Example}
\def\gop{\frak{p}}
\newcommand{\frg}{\mathfrak{g}}
\newcommand{\frh}{\mathfrak{h}}
\newcommand{\frl}{\mathfrak{l}}
\newcommand{\frp}{\mathfrak{p}}
\newcommand{\frqqq}{\mathfrak{q}}
\newcommand{\frsl}{\mathfrak{sl}}
\newcommand{\caS}{{\mathcal S}}
\def\bbC{\mathbb{C}}
\def\bbP{\mathbb{P}}
\let\ccdot\cdot
\def\cdot{\hbox to 2.5pt{\hss$\ccdot$\hss}}
\newcommand{\gog}{{\mathfrak g}}
\newcommand{\cO}{{\Cal O}}
\newcommand{\bV}{{\Bbb V}}
\newcommand{\bC}{{\Bbb C}}
\newcommand{\eq}{\begin{equation}}
\newcommand{\eeq}{\end{equation}}
\newcommand{\eqn}{\begin{equation*}}
\newcommand{\eeqn}{\end{equation*}}
\newcommand{\la}{\lambda}
\renewcommand{\phi}{\varphi}
\let\ssize\scriptstyle
\newif\ifFIRST\newdimen\MAXright\MAXright0pt
\def\sdynkin{\bgroup\eightpoint\dynkin}
\def\endsdynkin{\enddynkin\egroup}
\def\dynkin{\bgroup\FIRSTtrue\hskip.5em\setbox1\hbox{$\diagup$}%
	\setbox2\hbox{$\diagdown$}%
	\setbox0\hbox to2\wd1{\hrulefill}%
	\setbox3\hbox{$\bullet$}%
	\setbox4\hbox{$\times$}%
	\setbox7\hbox{$\circ$}
	\def\whiteroot##1{\ifFIRST\setbox5\hbox{$##1$}\ifdim\wd5>1.3em
		\hskip-.5em\hskip.5\wd5\fi\fi\FIRSTfalse
		\hskip-.25em\raise1.5\wd3\hbox to0pt{\hss\hskip.45em$
			\ssize##1$\hss}\copy7\hskip-.25em\setbox6\hbox{$##1$}
		\MAXright\wd6}
	\def\root##1{\ifFIRST\setbox5\hbox{$##1$}\ifdim\wd5>1.3em%
		\hskip-.5em\hskip.5\wd5\fi\fi\FIRSTfalse%
		\hskip-.25em\raise1.5\wd3\hbox to0pt{\hss\hskip.45em$%
			\ssize##1$\hss}\copy3\hskip-.25em\setbox6\hbox{$##1$}%
		\MAXright\wd6}%
	\def\whitedroot##1{\ifFIRST\setbox5\hbox{$##1$}\ifdim\wd5>1.3em
		\hskip-.5em\hskip.5\wd5\fi\fi\FIRSTfalse
		\hskip-.25em\lower1.8\wd3\hbox to0pt{\hss\hskip.45em$
			\ssize##1$\hss}\copy7\hskip-.25em\setbox6\hbox{$##1$}
		\MAXright\wd6}%
	\def\whiterroot##1{\hskip-.25em\copy7\hbox to0pt{\hskip.3em$\ssize##1$\hss}%
		\hskip-.25em\setbox6\hbox{\hskip.6em$##1##1$}%
		\MAXright\wd6}%
	\def\droot##1{\ifFIRST\setbox5\hbox{$##1$}\ifdim\wd5>1.3em%
		\hskip-.5em\hskip.5\wd5\fi\fi\FIRSTfalse%
		\hskip-.25em\lower1.8\wd3\hbox to0pt{\hss\hskip.45em$%
			\ssize##1$\hss}\copy3\hskip-.25em\setbox6\hbox{$##1$}%
		\MAXright\wd6}%
	\def\rroot##1{\hskip-.25em\copy3\hbox to0pt{\hskip.3em$\ssize##1$\hss}%
		\hskip-.25em\setbox6\hbox{\hskip.6em$##1##1$}%
		\MAXright\wd6}%
	\def\norroot##1{\hskip-.36em\copy4\hbox to0pt{\hskip.3em$\ssize##1$\hss}%
		\hskip-.48em\setbox6\hbox{\hskip.6em$##1##1$}%
		\MAXright\wd6}%
	\def\noroot##1{\ifFIRST\setbox5\hbox{$##1$}\ifdim\wd5>1.3em%
		\hskip-.5em\hskip.5\wd5\fi\fi\FIRSTfalse%
		\hskip-.36em\raise1.5\wd3\hbox to0pt{\hss\hskip.6em$%
			\ssize##1$\hss}\copy4\hskip-.38em\setbox6\hbox{$##1$}%
		\MAXright\wd6}%
	\def\nodroot##1{\ifFIRST\setbox5\hbox{$##1$}\ifdim\wd5>1.3em%
		\hskip-.5em\hskip.5\wd5\fi\fi\FIRSTfalse%
		\hskip-.36em\lower1.8\wd3\hbox to0pt{\hss\hskip.6em$%
			\ssize##1$\hss}\copy4\hskip-.38em\setbox6\hbox{$##1$}%
		\MAXright\wd6}%
	\def\nolink{\hskip\wd0}
	\def\link{\raise.22em\copy0}%
	\def\llink##1{\raise.32em\copy0\hskip-\wd0%
		\raise.12em\copy0\hskip-.5\wd0\hbox to0pt{\hss$##1$\hss}\hskip.5\wd0}%
	\def\lllink##1{\raise.22em\copy0\hskip-\wd0\raise.32em\copy0\hskip-\wd0%
		\raise.12em\copy0\hskip-.5\wd0\hbox to0pt{\hss$##1$\hss}\hskip.5\wd0}%
	\def\rootupright##1{\hbox to0pt{\raise.45em\copy1\hskip-.25em\raise1.3\ht1%
			\hbox{\copy3\hskip.3em$\ssize##1$}\hss}%
		\setbox6\hbox{\hskip.6em\copy1\copy1$##1##1$}%
		\ifdim\MAXright<\wd6\MAXright\wd6\fi}%
	\def\whiterootupright##1{\hbox to0pt{\raise.45em\copy1\hskip-.25em\raise1.3\ht1
			\hbox{\copy7\hskip.3em$\ssize##1$}\hss}
		\setbox6\hbox{\hskip.6em\copy1\copy1$##1##1$}
		\ifdim\MAXright<\wd6\MAXright\wd6\fi}
	\def\norootupright##1{\hbox to0pt{\raise.45em\copy1\hskip-.36em\raise1.3\ht1%
			\hbox{\copy4\hskip.3em$\ssize##1$}\hss}%
		\setbox6\hbox{\hskip.6em\copy1\copy1$##1##1$}%
		\ifdim\MAXright<\wd6\MAXright\wd6\fi}%
	\def\rootdownright##1{\hbox to0pt{\raise-.5em\copy2\hskip-.25em\raise-1.35\ht1%
			\hbox{\copy3\hskip.3em$\ssize##1$}\hss}\setbox6%
		\hbox{\hskip.6em\copy2\copy2$##1##1$}%
		\ifdim\MAXright<\wd6\MAXright\wd6\fi}%
	\def\whiterootdownright##1{\hbox to0pt{\raise-.5em\copy2\hskip-.25em\raise-1.35\ht1
			\hbox{\copy7\hskip.3em$\ssize##1$}\hss}\setbox6
		\hbox{\hskip.6em\copy2\copy2$##1##1$}
		\ifdim\MAXright<\wd6\MAXright\wd6\fi}
	\def\rootdown##1{\hbox to0pt{\hskip-.05em\vrule height.25em depth.65em%
			\hskip-.25em\raise-.95em\hbox{\copy3\hskip.3em$\ssize##1$}\hss}%
		\setbox6\hbox{$##1$}%
		\ifdim\MAXright<\wd6\MAXright\wd6\fi}%
	\def\whiterootdown##1{\hbox to0pt{\hskip-.05em\vrule height.25em depth.65em
			\hskip-.25em\raise-.95em\hbox{\copy7\hskip.3em$\ssize##1$}\hss}
		\setbox6\hbox{$##1$}
		\ifdim\MAXright<\wd6\MAXright\wd6\fi}
	\def\dots{\hskip.5em\cdots\hskip.5em}}%
\def\enddynkin{\ifdim\MAXright>1em\hskip.5\MAXright\else\hskip.5em\fi\egroup}%
\begin{document} 

\title[]{BGG complexes in singular infinitesimal character for type A}
\author{Pavle Pand\v zi\'c}
\address[Pand\v zi\'c]{Department of Mathematics, University of Zagreb, Bijeni\v cka 30, 10000 Zagreb, Croatia}
\email{pandzic@math.hr}
\thanks{The first named author is
supported by grant no. 4176 of the Croatian Science Foundation, and by the Center of Excellence QuantiXLie.} 
\author{Vladim\'{\i}r Sou\v cek}
\address[Sou\v cek]{Matematick\'y \'ustav UK, Sokolovsk\'a 83, 186 75 Praha 8, Czech Republic}
\email{soucek@karlin.mff.cuni.cz}
\thanks{The second named author is
supported by the grant GA CR P201/12/G028}
\date{}
\subjclass[2010]{primary: 58J10; secondary: 53A55, 53A45, 58J70}
\keywords{Grassmannians, invariant differential operators, Bernstein-Gel'fand-Gel'fand (BGG) complexes, singular infinitesimal character }
\begin{abstract} We give a geometric construction of the BGG resolutions in singular infinitesimal character in the case of 1-graded complex Lie algebras of type A.
\end{abstract}

\maketitle
\section{Introduction}
\label{sec:intro}

The BGG complexes were first constructed by Bernstein, Gelfand and Gelfand in \cite{BGG}. Let $\gog$ be a simple complex Lie algebra. For each finite dimensional $\gog$-module $\bV,$  they constructed a resolution of  $\bV$ by direct sums of Verma modules. It was soon generalized by Lepowsky from the Borel case to the case of general parabolic subalgebras and generalized Verma modules \cite{L}.

It is well known that homomorphisms between (generalized) Verma modules correspond dually to invariant differential operators acting between the spaces of sections of the corresponding homogeneous vector bundles.
A geometric construction of the dual BGG resolutions over the flag manifolds (and their extensions to the ``curved" Cartan geometries) was described in \cite{CSS,CD}.   A typical feature of these resolutions is the fact that the whole orbit of the affine action of the Weyl group $W$ of $\gog$ is involved in the construction of these resolutions and all integral affine orbits in regular infinitesimal character are covered. The more complicated cases can be obtained from the basic one (corresponding to the trivial representation of $\gog$)
by the Zuckerman translation.

The aim of this paper is to give a geometric construction of BGG resolutions for integral orbits of the affine action of $W$ in singular infinitesimal character on Grassmannian manifolds $G(k,n) $ in the dual language of invariant differential operators on the big cell in  $G(k,n).$ 
This is closely related 
to the problem of classification of
Kostant modules in singular infinitesimal character, which was solved in the
paper by Boe and Hunziker \cite{BH}. There are two differences in the approach.

First,  we concentrate our attention only on the Kostant modules which are
resolved by complete BGG resolutions. The results about classification of Kostant
modules in \cite{BH} are based on the Enright-Shelton equivalence of categories between
singular and regular infinitesimal character \cite{ES}.  
The image of the finite-dimensional representation under the Enright-Shelton
correspondence is an infinite dimensional simple module, which is resolved
by the complete BGG resolution covering the whole orbit.
The Enright-Shelton theory applies to the cases when the parabolic subalgebra $\gop$ corresponds to the 1-graded Lie algebra.
This describes the BGG resolutions on the algebraic side of the story.

We are working in the dual language of invariant differential operators.
We classify and construct explicitly the set of all BGG resolutions for all
orbits in singular infinitesimal character (for all 1-graded cases in type A).
Our methods do not use the Enright-Shelton results.

Second, the results in \cite{ES} describe the Hasse diagram in singular infinitesimal character
by the corresponding Hasse diagram in regular infinitesimal character. The result does not
depend on a specific choice of the singularity (described by a subset $S$ of the set of simple roots), but it depends only on the size of the set. However, even if the overall structure    of the Hasse diagram is the same,  we shall show that
the type of intertwining differential operators in the BGG resolution  (and their order)
depend on a choice of $S.$  We shall give an explicit construction of differential operators in the complex and we show how their properties depend on the choice of $S.$

To describe our results more precisely, let $G=SL(n,\bbC)$, let $k$ be an integer between 1 and $n/2$, and let $P$ be the following maximal parabolic subgroup of 
$G$: 
\[
P=\left\{\begin{pmatrix} k\times k & k\times (n-k) \cr
0          & (n-k)\times (n-k)
\end{pmatrix}\right\}.
\]
The Levi subgroup of $P$ is $L=S(GL(k,\bbC)\times GL(n-k,\bbC))$. The corresponding Lie algebras are denoted by $\gop\subset\gog.$

Let $\frh\subset\frg$ be the Cartan subalgebra consisting of diagonal matrices. We will work with weights $\mu\in\frh^*\cong\bbC^n/\bbC(1,\dots,1)$, and for notational convenience, we will not require 
$\mu=(\mu_1,\dots,\mu_n)$ to satisfy the condition $\sum_i\mu_i=0$. We take the standard choice of positive roots, so that $\mu$ is dominant for $\frg$ if and only if $\mu_1\geq\dots\geq\mu_n$, and $\mu$ is dominant regular for $\frg$ if all the inequalities are strict. To fix a choice of a dominant $\mu$, we can require $\mu_n=0$. The weight $\mu$ is integral for $\gog,$
if all differences $\mu_i-\mu_{i+1}$ are integers.

We can view elements of $\frh^*$ also as weights for $\frl$; in this case, we separate the first $k$ coordinates from the last $n-k$ coordinates by a bar. A weight $\nu$ is regular and dominant for $\frg$ (i.e., for $\frl$) if and only if 
$\nu=(\nu_1,\ldots\nu_k|\nu_{k+1},\ldots,\nu_n)$
with $\nu_1>\nu_2>\ldots>\nu_k$ and $\nu_{k+1}>\nu_{k+2}>\ldots >\nu_n.$

Let $\mu\in\frh^*$ be a parameter which is singular and integral for $\frg$ but which has Weyl group conjugates that are regular for $\frl$. This means that $\mu$ has some coordinates that are repeated, but each of them only twice, so that a Weyl group element can put one member of each repeated pair among the first $k$ coordinates and the other among the last $n-k$ coordinates. Let $l$ be the number of repeated pairs; it follows that $1\leq l\leq k$. We can record the singularity precisely by conjugating $\mu$ to the $\frg$-dominant chamber, and then defining the singularity set as
\eq
\label{sing set}
S=\{s_1,\dots,s_l\},\quad s_1<\ldots<s_l,
\eeq
if the repeated pairs of coordinates are 
\[
\mu_{s_1}=\mu_{s_1+1},\dots,\mu_{s_l}=\mu_{s_l+1}.
\]
Note that necessarily $s_{r+1}\geq s_r+2$, for $r=1,\dots,l-1$.

  We need to understand the structure
of the part $Orb$ of the $W$-orbit of a fixed $\mu$ as above  consisting of regular $\gop$-dominant weights, because it is the set of inducing weights for singular BGG resolutions in one fixed infinitesimal block. 
The structure of $Orb$ depends, of course, on the order of singularity $l$.
We can describe it as follows.
Consider the Lie algebra $\gog'=\frsl(n-2l,\bC)$ and its Weyl
group $W'.$ Let $\frp'$ be the parabolic subalgebra of $\frg'$ consisting of block upper triangular matrices with diagonal blocks of sizes $k-l$ and $n-k-l$. Let $\frl'$ be the Levi subalgebra of $\frp'$ consisting of the block diagonal matrices in $\frp'$. 

There is a simple one-to-one correspondence between the set $Orb$ for $\mu$ and the analogous set $Orb'$ for $\mu'$, i.e., the set of all $W'$-conjugates of $\mu'$ that are dominant regular for $\frl'$. The set $Orb'$ is easy to write down - each element of $Orb'$ is given by dividing the coordinates of $\mu'$ into two groups of sizes $k-l$ and $n-k-l$, and then arranging each group in descending order.  
Now we can send any $\nu\in Orb$ to $\nu'\in Orb'$ obtained by deleting all repeated pairs from $\nu$. Conversely, we can start with $\nu'$ and add back the repeated pairs, one member of each pair into the first group of coordinates and one into the second group, each put into the right place to get descending order.

The following theorem summarizes the main results we prove in this paper.

\begin{thm}
\label{thmintro} 
Let $G=SL(n,\bbC)$, $\frg=\frsl(n,\bbC)$, and let $P$ be the parabolic subgroup of $G$ described above, for some $k\leq\frac{n}{2}$. Let $G(k,n)\cong G/P$ be the (complex) Grassmannian of $k$-dimensional
subspaces in $\bC^n.$ For every $\frp$-dominant integral weight $\la,$ let $F_\la$ denote the finite dimensional
$P$-module with lowest weight $-\la,$ and
let $\mathcal{O}_\frp(F_\la)$ denote the sheaf of sections of the homogeneous bundle on $G/P$ induced by $F_\la$.

Let $\mu$ be a dominant singular integral weight for $\frg$, such that every component of $\mu$ appears at most twice, and such that the number of repeated pairs is $l\leq k$.  Let $S$ be the singularity set (\ref{sing set}).

For any $s=0,\ldots,(k-l)(n-l-k),$ we define the chain space 
\[
C_s=\bigoplus_{\nu}\mathcal{O}_\frp(F_{\nu-\rho}),
\]
where the sum is over
all $\nu\in Orb$ with  $\nu'=w'(\mu')$, $\ell(w')=s$.
Here $\ell(w')$ is the length of $w'\in W'.$

Then for all $s=0,\ldots,(k-l)(n-l-k),$ there are invariant differential operators $D_s:C_s\to C_{s+1}$ such that on the big cell of $G/P$, 
the complex $(C_s,D_s)$ is a resolution of the 
$\gog$-module with lowest weight $-\mu+\rho$.

The degrees of the differential operators in the resolution depend  both on
the choice of the weight $\mu$ and on the choice of singularity set
 $S$ of a given degree $l.$
\end{thm}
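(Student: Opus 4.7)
The plan is to build $(C_s, D_s)$ by first transferring the combinatorial skeleton to the regular setting for the smaller algebra $\frg'=\frsl(n-2l,\bC)$, and then writing down the differential operators explicitly on the big cell of $G/P$. The correspondence $\nu\leftrightarrow\nu'$ spelled out in the introduction identifies $Orb$ with $Orb'$ and sends the primitive edges of the Hasse diagram on $Orb$ to the primitive edges of the standard Hasse diagram on the $\frl'$-dominant $W'$-conjugates of $\mu'$. In this way the chain spaces and the squares on which one has to verify $D^2=0$ are prescribed by the ordinary regular BGG machinery applied to $\frg'$; only the orders of the operators along each edge need to react to the choice of singularity set $S$.

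Having fixed the combinatorics, I would define the differential operators locally on the big cell. Identify the big cell of $G(k,n)$ with the space of $k\times(n-k)$ complex matrices, so that restriction makes any section of $\mathcal{O}_\frp(F_\la)$ into a polynomial map into $F_\la$. For each Hasse neighbour pair $\nu,\tilde\nu$ in $Orb$, a standard weight calculation shows that the space of $P$-invariant polynomial-coefficient differential operators from $\mathcal{O}_\frp(F_{\nu-\rho})$ to $\mathcal{O}_\frp(F_{\tilde\nu-\rho})$ of the appropriate bidegree is one-dimensional, and I would take the corresponding component of $D_s$ to be that unique operator, normalised up to sign. Its order is dictated by the displacement $\nu-\tilde\nu$; for a primitive edge this equals one in the regular case, but grows by the number of singular positions that the associated reflection has to cross over, yielding the advertised dependence on $S$.

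The complex property $D_{s+1}\circ D_s=0$ then reduces, by invariance, to scalar identities around each Hasse square: each of the two possible compositions is a multiple of the unique invariant operator of the appropriate bidegree between the source and the target of the square, and a uniform sign choice on the edges forces them to cancel. This is the standard BGG square lemma in a slightly modified form and should go through with no essential difficulty. The main obstacle is exactness. My plan is to verify it locally on the big cell: equip the space of polynomial sections with a natural filtration whose associated graded is a Koszul-type complex on $\bC^{k(n-k)}$, and check that this associated graded is term by term isomorphic to the associated graded of the regular BGG resolution of the appropriate finite-dimensional $\frg'$-module pulled back along the correspondence of orbits. Because those graded pieces are independent of $S$ and the regular complex for $\frg'$ is acyclic except in degree zero, the same should hold for $(C_s,D_s)$. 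A final character computation, using the alternating sum of the chain spaces, would then identify the remaining cohomology with the $\gog$-module of lowest weight $-\mu+\rho$.
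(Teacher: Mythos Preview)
Your approach is genuinely different from the paper's. The paper does not attempt to write down invariant operators directly or to reduce exactness to a Koszul/filtration argument. Instead it uses the Penrose transform for the double fibration $G/Q \leftarrow G/(P\cap Q) \to G/P$, with $Q$ the parabolic of type $(l,n-l)$: the pullback of $\mathcal{O}_\frqqq(\mu)$ is resolved by the relative BGG resolution along the fibres of $\eta$, and the operators $D_s$ are the (higher) differentials of the resulting hypercohomology spectral sequence, built by an explicit zig-zag construction at the $E_1$ level. The identity $D^2=0$ is then proved by pushing these zig-zags forward (two separate lemmas, a one-step and a two-step push), and exactness on the big cell is deduced from a cohomology vanishing theorem for the twistor image $W\subset G/Q$, established via an explicit Stein covering of $W$ indexed by Pl\"ucker minors together with Mayer--Vietoris.

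Your outline has two real gaps. First, the claim that the space of $P$-invariant differential operators of the prescribed order between neighbouring bundles is one-dimensional is not a ``standard weight calculation'' in singular infinitesimal character. Precisely for the edges that cross positions in $S$ the operators are \emph{nonstandard} (higher differentials in the paper's language), and neither their existence nor their uniqueness follows from the usual generalized Verma module arguments; the paper's zig-zag construction is what actually produces them, and the $D^2=0$ verification there is substantially more delicate than a square lemma with sign choices. Second, and more seriously, your exactness argument does not stand up as written. The correspondence $\nu\leftrightarrow\nu'$ is a bijection of finite index sets, not a morphism of varieties, so ``pulling back the regular BGG resolution for $\frg'$ along the correspondence of orbits'' has no evident meaning. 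Moreover, the orders of the operators---hence their principal symbols, which is exactly what an associated-graded construction would see---depend on $S$, so your assertion that the graded pieces are independent of $S$ is at odds with the very phenomenon you are trying to exhibit. Without a concrete filtration and an explicit identification of the graded complex, this step is not a proof but a hope; the paper's Stein/Mayer--Vietoris argument is doing genuine work here that your sketch does not replace.
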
	

As in the regular cases, the simplest (bottom) case corresponds to the choice $\mu=\rho.$ In this case, the relative BGG resolution
 is fiberwise the de Rham complex (twisted by a line bundle), and all operators in it have order one. The orders of operators in the singular
 BGG sequences on $G/P$ are then equal to the order of the differential
 in the spectral sequences (i.e., to the number of steps used). It is well
 visible from examples presented in Section 2 that the Hasse diagrams
 for different singularity sets $S$ with the same order of singularity
 are the same but orders of the corresponding differential operators
 depend on the singularity set. It is also visible in these examples that 
 the types of operators also depend on $S$; sometimes they are standard
 (differential of order $1$ in the spectral sequence) and sometimes
 they are nonstandard (higher order differentials).

The main theorem is proved using the Penrose transform for a suitable choice of the twistor space $G/Q.$ The choice  of the parabolic subgroup $Q$ depends on the size $l$ of the singularity set $S.$ 
 The subgroup $Q$ is given by
\[
Q=\left\{\begin{pmatrix} l\times l & l\times (n-l) \cr
                                      0          & (n-l)\times (n-l)
           \end{pmatrix}\right\}.
\]
  The Levi subgroup of $Q$ is
$M=S(GL(l,\bbC)\times GL(n-l,\bbC))$, and there is now exactly one conjugate of 
$\mu$ that is dominant regular for $M$: the first group of $l$ coordinates must be $\mu_{s_1},\mu_{s_2},\dots,\mu_{s_l}$, i.e., one member from each repeated pair, and the second group contains other coordinates of $\mu$, in descending order.
 
Following \cite{BE}, we now use Penrose transform attached to the double fibration  
\medskip
\eq
\label{doublefib}
\xymatrix{
& & & & G/(P\cap Q) \ar[rd]^\tau\ar[ld]_\eta & & & &  \\
& & &  G/Q & & G/P.  & &  \\ 
}
\medskip
\eeq

The partial flag variety $G/P$ is the Grassmannian $G(k,n)$ of $k$-planes in 
$\bbC^n$, $G/Q$ is the Grassmannian $G(l,n)$ of $l$-planes in 
$\bbC^n$ and $G/(P\cap Q)$ is the partial flag variety $F(l,k,n)$ of flags 
\[
0\subset L_1\subset L_2\subset\bbC^n,\qquad \dim L_1=l,\ \dim L_2=k.
\]    

Let $U$ be the big cell in $G/P$. Let $V=\tau^{-1}(U)$ and let $W=\eta(V)$.
Then we have the double fibration
\bigskip
\eq
\label{doublefibcell}
\xymatrix{
& & & & V \ar[rd]^\tau\ar[ld]_\eta & & & &  \\
& & &  W & & U.  & &  \\ 
}
\eeq

The Penrose transform starts with the sheaf $\mathcal{O}_{\frqqq}(\mu)=\mathcal{O}_{\frqqq}(F_{\mu-\rho})$ of sections of the $G$-equivariant vector bundle on $G/Q$ corresponding to the finite-dimensional representation of $Q$ with lowest weight $-\mu+\rho$. 

The sheaf $\mathcal{O}_{\frqqq}(\mu)$ is restricted to $W$ and pulled back to $V$, by the sheaf-theoretic inverse image functor. Then the obtained sheaf on $V$ is resolved by the relative BGG resolution along the fiber, 
$\Delta^{\cdot}(\mu)$, which is in turn pushed down to $U$. The main fact about Penrose transform is the existence of the hypercohomology spectral sequence

\eq
\label{hyper}
E_1^{pq}=\Gamma(U,\tau_*^q\Delta^p(\mu)) \Rightarrow E_\infty^{pq}=H^{p+q}(W,\mathcal{O}_{\frqqq}(\mu)).
\eeq

We will study this spectral sequence in detail. In particular, we will construct all the (higher) differentials of this spectral sequence explicitly. A special feature will be that all of them can be defined on $E_1$, unlike in the usual situation when $d_r$ is only defined on $E_r$. 
We will then show that the complex $E_1$ with all the higher differentials put together into one differential gives the BGG resolution described in Theorem \ref{thmintro}. 

In Section \ref{sec:BGG} we explain in some detail the relative BGG resolutions mentioned above. In particular, we identify objects of our resolution-to-be.
In Section \ref{sec:higher} we define the higher differentials of the spectral sequence, all at the level of $E_1$, and so we get our singular BGG complex. Finally, in Section \ref{sec:exactness} we prove the exactness of this complex on 
the big cell. It is in fact true that the complex is exact as a complex of sheaves, but the proof of this fact requires additional techniques and we postpone it to a future publication.

In this paper we only consider type A groups. The conformal case was settled a while ago \cite{B}, and type C is being considered by Rafael Mr\dj en (in preparation).

\noindent{\bf Acknowledgement.} The authors would like to thank Mike Eastwood for a discussion, comments and useful hints concerning the topic of the paper.

\section{Relative BGG resolutions and their direct images}
\label{sec:BGG}

The main idea of the application of the Penrose transform for construction of BGG sequences in singular infinitesimal character
can be explained as follows. Fibers of the projection $\eta$ to the twistor space are again flag manifolds. Sections of the pullback of the sheaf $\mathcal{O}_{\frqqq}(\mu)$ from the subset $W$ in the twistor
space to $V$ in the correspondence space are constant along the fibers,
hence it is possible to consider their fiberwise resolutions
along individual fibers using the BGG resolutions for finite dimensional modules, suitably glued together over the base space. This is what is done by the relative BGG resolutions.
A description of the algorithm for computing of the relative BGG complex is given in \cite{BE}.
  A detailed
treatment of relative BGG resolutions in geometric form (even
in the curved version) can be found in recent preprints \cite{CS1,CS2}.

While the bundles chosen on the twistor space (and all other
bundles used in the process of the Penrose transform) can have singular
infinitesimal character, relative BGG resolutions for finite dimensional representations used 
in the fiberwise  resolutions are in regular infinitesimal character
and the standard BGG constructions for them can be used. So the construction for BGG resolution in regular cases is used in the machine
of the Penrose transform to get resolutions in singular infinitesimal
character.

Before going to the general case, we prefer  to illustrate the behaviour of the machine in some simple examples.

\begin{ex}
	{\rm 
 We first discuss an example of an orbit in the regular BGG sequence
 for the Grassmannian $G(2,6).$ So let  $G=SL(5,\bbC)$
 with the Levi subgroup of $P$ equal to $L=S(GL(2,\bbC)\times GL(3,\bbC))$. 
Take the parameter  $\mu=(43|210).$
The two groups of coordinates separated by the bar correspond to the two factors of $L$. The points of the orbit (the Hasse diagram) are the conjugates of $\mu$ which are dominant for $L$, i.e., the coordinates in each of the groups are decreasing:

$$
\begin{matrix} 
(43|210)& \rightarrow & (42|310)& \rightarrow & (41|320)& \rightarrow & (40|321) \\ 
    & & \downarrow & & \downarrow & & \downarrow \\
& & (32|410)&\rightarrow & (31|420)& \rightarrow & (30|421) \\
& & & & \downarrow & & \downarrow \\
& & & & (21|430)& \rightarrow & (20|431) \\
& & & & & & \downarrow  \\
& & & & & &( 10|432)\\
\end{matrix}
$$

The arrows, denoting immediate successors, always connect two conjugates of 
$\mu$ that can be obtained from each other by a single transposition across the bar. 

To obtain the algebraic (i.e., the category $\cO$) version of the BGG resolution from the Hasse diagram, we introduce the degree by letting the first point have degree zero, and other points have degree equal to the number of arrows needed to get to the point from the first point. Then we add up the Verma modules corresponding to the points in the diagram of each degree, and the morphisms follow the arrows of the diagram, but in the opposite direction. The geometric (sheaf) version of the BGG resolution is constructed analogously, but now the morphisms go in the same direction as the arrows in the diagram.
}
\end{ex}

We shall now present further examples of relative BGG sequences and we shall discuss their direct images.
 
In general, we choose the twistor space $G/Q$ with the Levi subgroup $L'$ of $Q$ equal to 
$S(GL(l)\times GL(n-l))$, in such a way that $l$ is exactly the number of repeated coordinates of 
the parameter $\mu$. Then there is only one conjugate of $\mu$,
\[
\tilde\mu= (\tilde\mu_1\,\big|\, \tilde\mu_2),
\]
which is dominant regular for $L'$. It is determined by the fact that the first group of coordinates $\tilde\mu_1$ consists of exactly one coordinate from each repeated pair. 

The terms of the relative BGG resolution of the pullback sheaf correspond to conjugates 
of $\mu$ with three groups of coordinates, the first group being always equal to 
$\tilde\mu_1$. The other two groups of coordinates are of sizes $k-l$ and $n-k$, and they correspond to the regular BGG sequence for $\tilde\mu_2$. 

\begin{ex} 
{\rm 
Let $n=8$, $k=4$ and $l=2$. The double fibration (\ref{doublefib}) can be described by

\[
\dynkin \root{ }\link\noroot{ }\link\root{ }\link\noroot{ }
\link\root{ }\link\root{ }\link\root{ }
\enddynkin
\]
\[
\swarrow\hskip 4cm\searrow
\]
\[
\dynkin \root{ }\link\noroot{ }\link\root{ }\link\root{ }
\link\root{ }\link\root{ }\link\root{ }
\enddynkin
\hskip 1 cm
\dynkin \root{ }\link\root{ }\link\root{ }\link\noroot{ }
\link\root{ }\link\root{ }\link\root{ }
\enddynkin
\] 
\vskip 5mm

The meaning of this is that $G/P$ corresponds to the standard parabolic subgroup $P$ which is obtained from $G=SL(8)$ by crossing the fourth point of the Dynkin diagram, i.e., the roots for the Levi factor of $P$ are generated by the simple roots for $G$ other than the fourth one. Similarly, $G/Q$ corresponds 
to $Q$ which is obtained from $G$ by crossing the second point of the Dynkin diagram, and $G/(P\cap Q)$ corresponds to $P\cap Q$ which is obtained from $G=SL(8)$ by crossing both the second and the fourth point of the Dynkin diagram.

As we said above, choosing $l=2$ corresponds to considering $\mu$ with two repeated coordinates. For example, $\mu$ could be 
$(55432210)$. Then the relative BGG resolution is
 
\[
\begin{matrix}
(52|54|3210)& \hskip -2mm \rightarrow \hskip -2mm &(52|53|4210)& \hskip -2mm \rightarrow \hskip -2mm &(52|52|4310)& \hskip -2mm \rightarrow \hskip -2mm &(52|51|4320) \hskip -2mm &\rightarrow \hskip -2mm &(52|50|4321)\\
&& \downarrow && \downarrow &&\downarrow &&\downarrow \\
& &(52|43|5210)& \hskip -2mm \rightarrow \hskip -2mm &(52|42|5310)& \hskip -2mm \rightarrow \hskip -2mm &(52|41|5320)
& \hskip -2mm \rightarrow \hskip -2mm &(52|40|5321)\\
&&&& \downarrow &&\downarrow &&\downarrow \\
 && && (52|32|5410)& \hskip -2mm \rightarrow \hskip -2mm &(52|31|5420) &\hskip -2mm \rightarrow \hskip -2mm &(52|30|5421)\\
 &&&&&& \downarrow &&\downarrow \\
 &&&&&& (52|21|5430) & \hskip -2mm \rightarrow \hskip -2mm &(52|20|5431)\\
 &&&&&&&& \downarrow \\
 &&&&&&&&(52|10|5432)\\
\end{matrix}
\]
\medskip

As usual, the direct images are computed using the Bott-Borel-Weil Theorem. 
In practice, this means we are deleting the first of the two bars in each of the $W$-conjugates of $\mu$ in the Hasse diagram. After this, the second group is automatically dominant regular for $L$, but the first group need not be dominant or regular. By the
Bott-Borel-Weil theorem, if the first group is not regular, then all cohomology groups of the corresponding sheaf are zero. If the first group is regular, there is exactly one nonvanishing cohomology group, and it appears in degree equal to the length of the Weyl group element required to make the corresponding conjugate of $\mu$ dominant.

In the above example, the conjugates of $\mu$ which have the group of first four coordinates regular are $(5243|5210)$, $(5241|5320)$, $(5240|5321)$, $(5231|5420)$,
$(5230|5421)$, and $(5210|5432)$. The respective cohomology degrees are 2, 1, 1, 1, 1, and 0. (For example, to permute 5243 to descending order we need two transpositions of neighbours, so the degree is two.)
 
A short description of these direct images is 
$$
\begin{matrix}
x&x&x&x&x\\
&2&x&1&1\\
&&x&1&1\\
&&&x&x\\
&&&&0\\
\end{matrix}
$$
Here $x$ means there is no cohomology at the corresponding point, and each number shows the cohomology degree of the corresponding point.

We can similarly analyze other cases; for example, if the dominant weight is
$\mu=(55443210)$, then the relative Hasse diagram is 

\[
\begin{matrix}
(54|54|3210)&\hskip -2mm \rightarrow \hskip -2mm &(54|53|4210)& \hskip -2mm \rightarrow \hskip -2mm &(54|52|4310)& \hskip -2mm \rightarrow \hskip -2mm &(54|51|4320)
&\hskip -2mm \rightarrow \hskip -2mm &(54|50|4321)\\
&& \downarrow && \downarrow &&\downarrow &&\downarrow \\
& &(54|43|5210)& \hskip -2mm \rightarrow \hskip -2mm &(54|42|5310)& \hskip -2mm \rightarrow \hskip -2mm &(54|41|5320)
&\hskip -2mm \rightarrow \hskip -2mm &(54|40|5321)\\
&&&& \downarrow &&\downarrow &&\downarrow \\
&& && (54|32|5410)& \hskip -2mm \rightarrow \hskip -2mm &(54|31|5420)
&\hskip -2mm \rightarrow \hskip -2mm &(54|30|5421)\\
&&&& &&\downarrow &&\downarrow \\
&&& &&  & (54|21|5430)
&\hskip -2mm \rightarrow \hskip -2mm &(54|20|5431)\\
&&&& &&&&\downarrow \\
&&&& &&  &  
&(54|10|5432)\\
\end{matrix}
\]
  
The short description of the direct images is

\[
\begin{matrix}
x&x&x&x&x\\
&x&x&x&x\\
&&0&0&0\\
&&&0&0\\
&&&&0\\
\end{matrix}
\]

For $\mu=(54321100)$, the relative Hasse diagram is 

\[
\begin{matrix}
(10|54|3210)&\hskip -2mm \rightarrow \hskip -2mm &(10|53|4210)& \hskip -2mm \rightarrow \hskip -2mm &(10|52|4310)& \hskip -2mm \rightarrow \hskip -2mm &(10|51|4320)
&\hskip -2mm \rightarrow \hskip -2mm &(10|50|4321)\\
&& \downarrow && \downarrow &&\downarrow &&\downarrow \\
 &&(10|43|5210)& \hskip -2mm \rightarrow \hskip -2mm &(10|42|5310)& \hskip -2mm \rightarrow \hskip -2mm &(10|41|5320)
&\hskip -2mm \rightarrow \hskip -2mm &(10|40|5321)\\
&&&& \downarrow &&\downarrow &&\downarrow \\
&& && (10|32|5410)& \hskip -2mm \rightarrow \hskip -2mm &(10|31|5420)
&\hskip -2mm \rightarrow \hskip -2mm &(10|30|5421)\\
&&&& &&\downarrow &&\downarrow \\
&&& &&  & (10|21|5430)
&\hskip -2mm \rightarrow \hskip -2mm &(10|20|5431)\\
&&&& &&&&\downarrow \\
 &&&&&&  &  
&(10|10|5432)\\
\end{matrix}
\]

and the short description of the direct images is

\[
\begin{matrix}
4&4&4&x&x\\
&4&4&x&x\\
&&4&x&x\\
&&&x&x\\
&&&&x\\
\end{matrix}
\]

For $\mu=(54432110)$, the relative Hasse diagram is 

\[
\begin{matrix}
(41|54|3210)&\hskip -2mm \rightarrow \hskip -2mm &(41|53|4210)& \hskip -2mm \rightarrow \hskip -2mm &(41|52|4310)& \hskip -2mm \rightarrow \hskip -2mm &(41|51|4320)
&\hskip -2mm \rightarrow \hskip -2mm &(41|50|4321)\\
&& \downarrow && \downarrow &&\downarrow &&\downarrow \\
& &(41|43|5210)& \hskip -2mm \rightarrow \hskip -2mm &(41|42|5310)&\hskip -2mm \rightarrow \hskip -2mm & (41|41|5320)
&\hskip -2mm \rightarrow \hskip -2mm &(41|40|5321)\\
&&&& \downarrow &&\downarrow &&\downarrow \\
 &&&& (41|32|5410)& \hskip -2mm \rightarrow \hskip -2mm &(41|31|5420)
&\hskip -2mm \rightarrow \hskip -2mm &(41|30|5421)\\
&&&& &&\downarrow &&\downarrow \\
&&& &&  & (41|21|5430)
&\hskip -2mm \rightarrow \hskip -2mm &(41|20|5431)\\
&&&& &&&&\downarrow \\
&&&& &&  &  
&(41|10|5432)\\
\end{matrix}
\]

and the short description of the direct images is

\[
\begin{matrix}
x&3&3&x&2\\
&x&x&x&x\\
&&2&x&1\\
&&&x&1\\
&&&&x\\
\end{matrix}
\]
}
\end{ex}

We can see from these examples that while the relative BGG resolution corresponds to the regular BGG resolution with one member from each pair of repeated coordinates removed, the part of the diagram with non-vanishing direct images corresponds to the regular BGG resolution with both repeated coordinates removed. Namely, while one member from each repeated pair does not change  in the first group of coordinates, the second member must be without a change  in the last group of coordinates. In this way we have moved to the Enright-Shelton category mentioned in the introduction.

\begin{rem}
{\rm

To describe the situation in general, let us denote the repeated coordinates of 
$\mu$ by $i_1>\dots>i_l\geq 0$, and the non-repeated coordinates by
$j_1>\dots>j_{n-2l}\geq 0$.  

Let
\[
I=\{i_1,\dots,i_l\};\qquad J=\{j_1,\dots,j_{n-2l}\}.
\]
The relative Hasse diagram has vertices of the form
\[
(i_1\dots i_l|k_1\dots k_a|l_1\dots l_b),\qquad k_1,\dots,k_a,l_1,\dots,l_b\in I\cup J, 
\]
with $k_1>\dots>k_a$ and $l_1>\dots>l_b$. Here $a=k-l$ and $b=n-k$.

Such a vertex will give nonzero direct image if and only if $k_1,\dots,k_a\in J$. By the Bott-Borel-Weil 
Theorem, the degree of the nonvanishing direct image is the number of transpositions of neighboring
coordinates necessary to bring $i_1\dots i_l k_1\dots k_a$ to descending order. As in the above examples,
we label such a vertex by the corresponding degree, while the vertices with zero direct image are labeled 
by $x$.

The arrows in the relative Hasse diagram go from $(i_1\dots i_l|k_1\dots k_a|l_1\dots l_b)$ to every 
vertex that can be obtained from it by a transposition of some $k_r$ and some $l_s$. For a fixed $r$, such an
arrow is possible precisely when there are $s$ and $u$ such that $u\geq s$ and
\[
l_{s-1}>k_r>l_s>\dots>l_u>k_{r+1}>l_{u+1}.
\]
We set $k_{a+1}=-1$ to include the case $r=a$. In this situation, there are the following arrows in the relative Hasse diagram,
all pointing ``in the $r$th direction":
\begin{multline*}
(i_1\dots i_l|k_1\dots k_{r-1} k_r k_{r+1}\dots k_a|l_1\dots l_{s-1}l_s l_{s+1}\dots l_u\dots l_b)\to \\
\to (i_1\dots i_l|k_1\dots k_{r-1} l_s k_{r+1}\dots k_a|l_1\dots l_{s-1}k_r l_{s+1}\dots l_u\dots l_b)\to \\
\to (i_1\dots i_l|k_1\dots k_{r-1} l_{s+1} k_{r+1}\dots k_a|l_1\dots l_{s-1} k_r l_s l_{s+2} \dots l_u \dots l_b)\to\dots \\
\dots\to (i_1\dots i_l|k_1\dots k_{r-1} l_u k_{r+1}\dots k_a|l_1\dots l_{s-1}k_r l_s l_{s+1}\dots l_{u-1} l_{u+1}\dots l_b).
\end{multline*}
We say that these arrows point in the $r$th direction because the $r$th component in the second group is changing, while the other components stay without a change.
}
\end{rem}

\begin{prop} 
\label{jumps}
In the above situation, assume that $(i_1\dots i_l|k_1\dots k_a|l_1\dots l_b)$ is labeled by a number $\alpha$, and assume that for some $v$ such that $s<v\leq u$,
$l_s,\dots l_{v-1}\in I$, while $l_v\in J$. Then the vertex
\[
(i_1\dots i_l k_1\dots k_{r-1} l_v k_{r+1}\dots k_a|l_1\dots l_{s-1} k_r \dots l_{v-1} l_{v+1} \dots l_b),
\]
obtained by deleting the first bar, is labeled by $\alpha-(v-s)$, while the $v-s$ vertices in between these two are all labeled by $x$.
\end{prop}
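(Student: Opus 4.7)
The plan is to reduce both assertions to the Bott--Borel--Weil description of labels recalled just before the proposition: after deleting the first bar, the label of a vertex is the length of the shortest element of the Weyl group of $\frl$ bringing the first $k$ coordinates to strictly descending order, or $x$ if those coordinates have a repetition. Since the $i$-block $i_1>\dots>i_l$ and the $k$-block $k_1>\dots>k_a$ are each already descending, the label equals the number of cross-inversions, i.e.\ the number of pairs $(i_m,k_{r'})$ with $i_m<k_{r'}$. Both claims then become combinatorial statements about inversion counts and repetitions.

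First I would dispose of the $v-s$ intermediate vertices. From the arrow chain displayed just before the proposition, the $t$-th intermediate vertex ($1\le t\le v-s$) carries $l_{s+t-1}$ in position $r$ of its middle group. By hypothesis $l_s,\dots,l_{v-1}\in I$, so this entry coincides with some $i_m$ that is already present in the first group. After deleting the first bar, the combined list of the first $k$ coordinates has a repeated value, so the Bott--Borel--Weil cohomology vanishes and the label is $x$, as claimed.

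Next I would compute the label of the target vertex. Deleting the first bar of the $(v-s+1)$-st vertex in the chain replaces $k_r$ by $l_v$ in the sequence $i_1\dots i_l\, k_1\dots k_a$. Since $l_v\in(k_{r+1},k_r)$, the modified $k$-block remains strictly descending, so no intra-block inversions are gained or lost and the $i$-block is untouched. The cross-inversion count therefore changes by exactly $-|\{m:l_v<i_m<k_r\}|$, corresponding to those $i_m$ that were inverted with $k_r$ but are no longer inverted with $l_v$. The hypothesis that the original label is a number $\alpha$ (not $x$) forces $k_1,\dots,k_a\in J$, so every $i_m\in I$ appears exactly once as some $l_t$ in the second group. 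The $l_t$'s lying strictly between $l_v$ and $k_r$ are, by the chain $l_{s-1}>k_r>l_s>\dots>l_{v-1}>l_v$, precisely $l_s,l_{s+1},\dots,l_{v-1}$; by hypothesis these all lie in $I$. Hence there are exactly $v-s$ indices $m$ with $l_v<i_m<k_r$, and the new label is $\alpha-(v-s)$.

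The main obstacle is the combinatorial identification in the last step: showing that $\{m:l_v<i_m<k_r\}$ is in bijection with $\{s,\dots,v-1\}$. This requires simultaneously using the assumption $l_s,\dots,l_{v-1}\in I$ and $l_v\in J$, the positional chain $l_{s-1}>k_r>l_s>\dots>l_u>k_{r+1}$ that restricts where $l_v$ can sit, and the partition $I\sqcup J$ of the coordinate values of $\mu$ together with the fact that each $i_m\in I$ appears exactly once among the $l_t$'s (because $k_1,\dots,k_a\in J$). Once this bijection is in hand, the rest is a direct reading of Bott--Borel--Weil off the arrow chain.
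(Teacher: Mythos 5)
Your proof is correct and follows essentially the same route as the paper's: the intermediate vertices are $x$ because they carry an element of $I$ in the middle group, and the label drop is computed by counting the adjacent transpositions (inversions) that $k_r$ versus $l_v$ must make past the $i_m$'s lying strictly between them. You merely spell out, via the bijection with $\{l_s,\dots,l_{v-1}\}$, the identification of $\{m: l_v<i_m<k_r\}$ that the paper states more tersely ("which form a string inside $i_1\dots i_l$").
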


\begin{proof} It is clear that the vertices in between are labeled by $x$. Furthermore, in rearranging 
\[
(i_1\dots i_l k_1\dots k_{r-1} l_v k_{r+1}\dots k_a) 
\]
into descending order, $l_v$ does not have to cross over $l_s,\dots,l_{v-1}$,
which form a string inside $i_1\dots i_l$, while in rearranging 
\[
(i_1\dots i_l k_1\dots k_{r-1} k_r k_{r+1}\dots k_a)
\] 
into descending order, $k_r$ does have to cross over  $l_s,\dots,l_{v-1}$.
So we see that rearranging 
\[
(i_1\dots i_l k_1\dots k_{r-1} l_v k_{r+1}\dots k_a)
\] 
into descending order requires $v-s$ fewer
transpositions than rearranging 
\[
(i_1\dots i_l k_1\dots k_{r-1} k_r k_{r+1}\dots k_a),
\] 
so the label drops by $v-s$, as claimed.
\end{proof}

\begin{rem}
\label{enright-shelton}
{\rm
We also see from the above considerations that, as in the examples we have seen, the relative BGG resolution follows the pattern of the regular BGG resolution with one member from each pair of repeated coordinates of $\mu$ removed, while the points with nonvanishing direct image correspond to the terms of the regular BGG resolution with both repeated coordinates removed (the BGG resolution in the Enright-Shelton category). Our next task is to organize the points with nonvanishing direct image into a resolution. After we do this, we will see that this resolution is dual to the BGG resolution in the Enright-Shelton category.
}
\end{rem}

\section{Definition of the higher differentials}
\label{sec:higher}

\bigskip

We denote by $d_0$ the ``vertical" differential of the \v Cech complex, which we use to compute the Bott-Borel-Weil cohomology, and by 
$d_1$ the ``horizontal" differential of the relative BGG resolution. 
At every point
of the Hasse diagram corresponding to the relative BGG resolution, $d_1$ 
has components $d_1^{(r)}$  in the directions of the edges of the Hasse diagram 
emanating from that point.

Let $[a]$ be a vertical cohomology class at a point of the above Hasse diagram 
(by vertical cohomology we mean cohomology with respect to $d_0$). Suppose
that the next $i-1$ points in the $r$th direction from $a$ have no cohomology, at any vertical place of the corresponding column. Then we define
$d_i^{(r)} [a]$ as follows. 

Let $a_1=d_1^{(r)} a$. Since $d_0 a=0$, it follows that 
\[
d_0 a_1 =d_0 d_1^{(r)} a = -d_1^{(r)}d_0 a=0.
\]
Since we assumed there is no cohomology at the point where $a_1$ lies, it follows that
there is $a_2$ such that $d_0a_2=a_1$. Let $a_3=d_1^{(r)} a_2$. Then
\[
d_0a_3=d_0 d_1^{(r)} a_2=-d_1^{(r)} d_0 a_2 =-d_1^{(r)} a_1=-d_1^{(r)} d_1^{(r)} a =0.
\]
Since there is no cohomology at the point where $a_3$ lies, we can find $a_4$ such that
$d_0 a_4=a_3$. Proceeding like this, we find 
\[
a_0=a,a_1,a_2,\dots,a_{2i-1}
\]
with the properties

\begin{equation}
\label{diseq}
d_1^{(r)} a_{2k}=a_{2k+1}=d_0 a_{2k+2},\qquad k=0,1,\dots,i-2;\qquad d_1^{(r)}a_{2i-2}=a_{2i-1}.
\end{equation}

The corresponding picture is

\[
\begin{CD}
a_0 @>>> a_1  @. \\
@.  @AAA   @. \\
@. a_2 @>>> a_3  \\ 
@.@.@AAA \\
@. @. \cdots  \\
@.@.@.  @>>> a_{2i-3} \\
@.@.@.@.@AAA \\
@.@.@.@. a_{2i-2} @>>> a_{2i-1},
\end{CD}
\]

with vertical arrows representing $d_0$ and horizontal arrows representing $d_1^{(r)}$.
Note that while we do get $d_0 a_{2i-1}=0$, we can not proceed to find $a_{2i}$ with 
$d_0 a_{2i}=a_{2i-1}$, since we are not assuming that there is no cohomology at the point
where $a_{2i-1}$ lies. 

We define $d_i^{(r)}[a]=[a_{2i-1}]$, and we define $d_i[a]$ by putting together all the components
$d_i^{(r)}[a]$.

\begin{lem} 
\label{defdi}
Let $a_0,\dots,a_{2i-1}$ and $a_0',\dots,a_{2i-1}'$ be two sequences as above, with $[a_0]=[a_0']$, i.e.,
$a_0'-a_0=d_0b_0$ for some $b_0$. 
In particular, both sequences satisfy (\ref{diseq}), and we are assuming that there is no vertical cohomology at
places of $a_1,a_3,\dots,a_{2i-3}$.

Then $[a_{2i-1}]=[a_{2i-1}']$.
In other words, the above definition of $d_i$ is good, i.e., independent of all the choices.
\end{lem}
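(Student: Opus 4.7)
My plan is to produce an element whose $d_0$-boundary is $a_{2i-1}' - a_{2i-1}$, proceeding by an inductive zigzag along the staircase. I will write $c_k := a_k' - a_k$, so that $c_0 = d_0 b_0$ by hypothesis and, by linearity, the sequence $c_0, c_1, \ldots, c_{2i-1}$ still satisfies (\ref{diseq}). The key step is to construct elements $\beta_0, \beta_1, \ldots, \beta_{i-1}$ (with the convention $\beta_{-1} := 0$) such that
\[
c_{2k} = d_0 \beta_k - d_1^{(r)} \beta_{k-1}, \qquad k = 0, 1, \ldots, i-1,
\]
from which the conclusion will follow by one more application of $d_1^{(r)}$.

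The base case $k = 0$ is handled by setting $\beta_0 := b_0$. For the inductive step, assuming $\beta_{k-1}$ has been produced for some $1 \leq k \leq i-1$, I will invoke the graded commutation $d_0 d_1^{(r)} = -d_1^{(r)} d_0$ (the same identity used just before the lemma) together with $(d_1^{(r)})^2 = 0$ to compute
\[
c_{2k-1} = d_1^{(r)} c_{2k-2} = d_1^{(r)}(d_0 \beta_{k-1} - d_1^{(r)} \beta_{k-2}) = -d_0(d_1^{(r)} \beta_{k-1}).
\]
Combining this with the relation $c_{2k-1} = d_0 c_{2k}$ coming from (\ref{diseq}) yields $d_0(c_{2k} + d_1^{(r)} \beta_{k-1}) = 0$. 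Since by hypothesis the whole vertical column at the horizontal position of $a_{2k-1}$ is $d_0$-acyclic, one can then select $\beta_k$ with $c_{2k} + d_1^{(r)} \beta_{k-1} = d_0 \beta_k$, completing the inductive construction.

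With $\beta_{i-1}$ in hand, the same calculation performed one more time will yield
\[
c_{2i-1} = d_1^{(r)} c_{2i-2} = -d_0(d_1^{(r)} \beta_{i-1}),
\]
so that $a_{2i-1}' - a_{2i-1}$ lies in the image of $d_0$ and thus $[a_{2i-1}] = [a_{2i-1}']$. The only delicate point is to arrange the zigzag so that acyclicity is invoked only in columns $1, \ldots, i-1$, which is exactly what the hypothesis provides; the identity $(d_1^{(r)})^2 = 0$, used twice in the inductive step, is the same one already implicit in the construction of $d_i^{(r)}$ preceding the lemma, since the arrows in a fixed direction of the Hasse diagram themselves form a complex. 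Hence no input beyond what is already available in the setup will be needed.
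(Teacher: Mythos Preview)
Your proof is correct and is essentially the same zigzag induction as the paper's, just organized slightly differently: you pass to the differences $c_k = a_k' - a_k$ at the outset and index the auxiliary elements as $\beta_k$, whereas the paper keeps $a_k$ and $a_k'$ separate and writes the auxiliary elements as $b_{2k}$ (your $\beta_k$ is their $b_{2k}$, and your relation $c_{2k} = d_0\beta_k - d_1^{(r)}\beta_{k-1}$ is exactly their $a_{2k}' - a_{2k} - b_{2k-1} = d_0 b_{2k}$ once one substitutes $b_{2k-1} = -d_1^{(r)} b_{2k-2}$). Both arguments invoke acyclicity only in columns $1,\ldots,i-1$ and use the same identities $d_0 d_1^{(r)} = -d_1^{(r)} d_0$ and $(d_1^{(r)})^2 = 0$.
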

\begin{proof} We prove by induction on $k$ that $a_{2k-1}'=a_{2k-1}+d_0b_{2k-1}$ for some $b_{2k-1}$ in the image of
$d_1^{(r)}$. In particular, for $k=i$ we obtain the statement of the lemma.
If $k=1$, then 
\[
a_1'=d_1^{(r)}a_0'=d_1^r(a_0+d_0b_0)=a_1-d_0d_1^{(r)}b_0,
\]
and the claim is true with $b_1=-d_1^{(r)}b_0$.

Assume that the claim holds for some $k$, $1\leq k<i$, so $a_{2k-1}'=a_{2k-1}+d_0b_{2k-1}$, with $b_{2k-1}$ in the image of
$d_1^{(r)}$. Then
\[
d_0a_{2k}'=a_{2k-1}'=a_{2k-1}+d_0b_{2k-1}=d_0(a_{2k}+b_{2k-1}).
\]
So $d_0(a_{2k}'-a_{2k}-b_{2k-1})=0$, and since there is no vertical cohomology at this place, we conclude
\[
a_{2k}'-a_{2k}-b_{2k-1}=d_0b_{2k},
\]
for some $b_{2k}$. Applying $d_1^{(r)}$, we obtain
\[
a_{2k+1}'-a_{2k+1}-d_1^{(r)}b_{2k-1}=d_1^{(r)}d_0b_{2k}=-d_0d_1^{(r)}b_{2k}.
\]
Since $b_{2k-1}$ is in the image of $d_1^{(r)}$, $d_1^{(r)}b_{2k-1}=0$, and we obtain the claim for $k+1$, with
$b_{2k+1}=-d_1^{(r)}b_{2k}$. This finishes the induction.
\end{proof}

\begin{df}
We now define $d[a]$ as the sum of all $d_i[a]$.	
\end{df}

  We want to see that $d$ defines the singular BGG
resolution we wanted to obtain.

\begin{prop} 
\label{diff}
The above defined $d$ is a differential, i.e., $d^2=0$.
\end{prop}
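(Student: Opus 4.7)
The plan is to reduce $d^2=0$ to the identity $D^2=0$ for the total differential $D=d_0+d_1$ on the double complex. First I would verify that $(C^{\mathrm{tot}},D)$ is a complex: $d_0^2=0$ since $d_0$ is the \v Cech differential applied componentwise, $d_1^2=0$ since $d_1$ is the relative BGG differential, and $d_0d_1+d_1d_0=0$ because the relative BGG differential is a morphism of sheaves and thus commutes on the nose with the \v Cech resolution, becoming anticommutation under the total-complex sign convention.

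Given a class $[a]\in E_1$ represented by a $d_0$-cocycle $a=a_0$ at some vertex, I would construct a lift $A\in C^{\mathrm{tot}}$ of $a$ by iteratively adjoining the intermediate zigzag elements $a_{2k}^{(r)}$ (with alternating signs) for every direction $r$ emanating from the starting vertex, and then absorbing the cross-direction contributions $d_1^{(s)}a_{2k}^{(r)}$ with $s\ne r$ by choosing further $d_0$-primitives at those intermediate vertices --- possible since, by hypothesis, every vertex lying strictly between non-$x$ vertices along any zigzag has vanishing $d_0$-cohomology. The iteration terminates as soon as each zigzag reaches a non-$x$ vertex. The defining relations $d_0a_{2k}^{(r)}=a_{2k-1}^{(r)}$ and $d_1^{(r)}a_{2k}^{(r)}=a_{2k+1}^{(r)}$ then telescope so that
\[
DA=\sum_r(-1)^{i_r-1}a_{2i_r-1}^{(r)},
\]
which is a $d_0$-cocycle whose class in $E_1$ is exactly $d[a]$.

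Since $D^2A=0$ in the total complex, we have $D(DA)=0$; passing to $E_1$, this reads $d([DA])=0$, and since $[DA]=d[a]$ we conclude $d^2[a]=0$. The main obstacle will be the iterative absorption of cross-direction contributions in the construction of $A$: one must verify the process terminates and produces an $A$ for which $DA$ is an honest cocycle representative of $d[a]$ at every non-$x$ vertex, rather than merely a representative modulo $d$-exact corrections (which would break the final step). This amounts to a multi-directional strengthening of Lemma~\ref{defdi}, and relies on the controlled structure of the $x$-region of the relative Hasse diagram given by Proposition~\ref{jumps}: each cross-direction correction lands in a zone of vanishing $d_0$-cohomology, so the process is sustained until it exits into another non-$x$ vertex, where it is absorbed into the output $\sum_r(-1)^{i_r-1}a_{2i_r-1}^{(r)}$.
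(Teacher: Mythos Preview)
Your approach is genuinely different from the paper's. The paper never assembles a global lift $A$; instead it reduces $d^2=0$ to the pairwise identity $d_i^{(r)}d_j^{(s)}+d_j^{(s)}d_i^{(r)}=0$ for each pair of directions $r,s$, and proves this by two explicit ``push-forward'' lemmas: a one-step lemma showing that applying $(-1)^kd_1^{(s)}$ termwise to a $d_i^{(r)}$-sequence yields another $d_i^{(r)}$-sequence, and a two-step lemma that handles the $d_0^{-1}$ part of the $d_j^{(s)}$ zigzag. Iterating these pushes the $d_i^{(r)}$-sequence across the entire $d_j^{(s)}$-staircase, and the endpoints match up to give the anticommutation.

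Your total-complex route is natural, but there is a genuine gap beyond termination of the absorption. The paper's $d$ is defined \emph{direction by direction}: $d_i^{(r)}[a]$ is built from a zigzag using only $d_1^{(r)}$, never the full $d_1$. Consequently the step ``$D(DA)=0$, passing to $E_1$, reads $d([DA])=0$'' is not automatic. Writing $c=DA$, you have $d_1c=0$, but $d[c]$ is computed from each component $c_{w_r}$ by a single-direction zigzag, and the individual first step $d_1^{(s)}c_{w_r}$ need not vanish --- only the sum over all arrows into a given vertex does. To conclude $d[c]=0$ you must show that the single-direction zigzags from the various components of $c$ can be chosen compatibly so as to cancel; that compatibility is exactly what the paper's push-forward lemmas establish. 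Equivalently, you would need to prove that your lift construction is well-defined (independent of all choices of $d_0$-primitives and of the order of absorption), so that $A'=DA$ is itself an admissible lift of $[DA]$ yielding $d[DA]=[D^2A]=0$; but that well-definedness is again a multi-directional strengthening of Lemma~\ref{defdi}, not a consequence of it.

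The claim $DA=\sum_r(-1)^{i_r-1}a_{2i_r-1}^{(r)}$ also needs more than Proposition~\ref{jumps}. What you actually need is that every correction in the absorption sits at a vertex where at least \emph{two} second-group coordinates are strictly below their values at $v_0$, whereas any relative-Hasse predecessor of $w_r$ reachable from $v_0$ differs from $v_0$ only in the $r$th coordinate; hence no correction feeds into $DA|_{w_r}$. This is a structural fact about how paths in the relative Hasse diagram project to the Enright--Shelton diagram, not something Proposition~\ref{jumps} (which is a one-directional statement about label drops) directly supplies.
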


\begin{proof} It is enough to prove that for any two directions $r,s$ at a point of the Hasse diagram of the fiber,
and any $i,j\geq 1$, 
\[
d_i^{(r)}d_j^{(s)}+d_j^{(s)}d_i^{(r)}=0.
\]
This is clear if $r=s$, so we assume in the following that $r\neq s$.

We will call any sequence $a_0=a,a_1,a_2,\dots,a_{2i-1}$ satisfying (\ref{diseq}) a $d_i^{(r)}$-sequence.
Here we do not assume anything about vanishing of cohomology; if we however assume vanishing as in the 
definition of $d_i$, then we see from Lemma \ref{defdi} that a $d_i^{(r)}$-sequence starting with a cycle 
$a_0$ always exists, and that any such sequence can be used to define $d_i^{(r)}[a_0]$, as $[a_{2i-1}]$.  

We first prove the following statement about pushing out $d_i^{(r)}$-sequences.

\begin{lem}[\bf one-step push forward]
\label{pushout}
Let $a_0,\dots,a_{2i-1}$ be any $d_i^{(r)}$-sequence. Let $s\neq r$, and set 
\[
b_k=(-1)^k d_1^{(s)} a_k,\qquad k=0,1,\dots 2i-1.
\]
Then $b_0,\dots,b_{2i-1}$ is a $d_i^{(r)}$-sequence.
\end{lem}
\begin{proof} If $1\leq j\leq i-1$, then
\[
d_0b_{2j}=d_0d_1^{(s)}a_{2j}=-d_1^{(s)}d_0a_{2j}=-d_1^{(s)}a_{2j-1}=b_{2j-1}.
\]
Furthermore, if $0\leq j\leq i-1$, then
\[
d_1^{(r)}b_{2j}=d_1^{(r)}d_1^{(s)}a_{2j}=-d_1^{(s)}d_1^{(r)}a_{2j}=-d_1^{(s)}a_{2j+1}=b_{2j+1}.
\]
\end{proof}

We can now prove Proposition  \ref{diff} in case $j=1$. Namely, let $a=a_0,a_1,\dots,a_{2i-1}$ be a  $d_i^{(r)}$-sequence
computing  $d_i^{(r)}a$. Then, by Lemma \ref{pushout}, $b_k=(-1)^k d_1^{(s)} a_k$ defines a  $d_i^{(r)}$-sequence computing 
$d_i^{(r)} d_1^{(s)} a$. In particular, 
\[
b_{2i-1}=-d_1^{(s)} a_{2i-1}=-d_1^{(s)} d_i^{(r)} a
\] 
is equal to $d_i^{(r)} d_1^{(s)} a$ and the claim follows.
Let us now assume that both $i$ and $j$ are bigger than 1. We need another lemma about pushing out sequences.

\begin{lem}[\bf two-step push forward]
\label{pushout2}
Let $a_0,\dots,a_{2i-1}$ be a $d_i^{(r)}$-sequence. Assume that for each $k$, $a_k=d_1^{(s)}a_k'$ for some $a_k'$ (here $s\neq r$).
Assume further that $a_0=d_0 b_0$ for some $b_0$, and that there is no vertical cohomology at the positions of $a_2,a_4,\dots, a_{2i-2}$. 
Then there is a $d_i^{(r)}$-sequence $c_0,\dots,c_{2i-1}$, with $c_0=d_1^{(s)}b_0$,
and such that there is a $b_{2i-1}$ with $d_0b_{2i-1}=a_{2i-1}$ and $d_1^{(s)}b_{2i-1}=c_{2i-1}$.
\end{lem}
\begin{proof} We set $c_0=d_1^{(s)} b_0$ and $c_1=d_1^{(r)}c_0$. Since
\[
d_0d_1^{(r)}b_0=-d_1^{(r)}d_0b_0=-d_1^{(r)}a_0=-a_1=-d_0a_2,
\]
we see that $d_0(d_1^{(r)}b_0+a_2)=0$. Since there is no vertical cohomology at the point where $a_2$ lies, there is a $b_2$ such that
\[
d_0b_2=d_1^{(r)}b_0+a_2.
\]
Let $c_2=d_1^{(s)}b_2$. Then
\[
d_0c_2=d_0d_1^{(s)}b_2=-d_1^{(s)}d_0b_2=-d_1^{(s)}d_1^{(r)}b_0-d_1^{(s)}a_2=d_1^{(r)}d_1^{(s)}b_0=d_1^{(r)}c_0=c_1.
\]
(Here we used the assumption that $a_2=d_1^{(s)}a_2'$, which implies $d_1^{(s)}a_2=0$.)

We see that $c_0,c_1,c_2$ is the beginning of a $d_i^{(r)}$-sequence. We can continue the construction in the same way, by checking
$d_0(d_1^{(r)}b_2+a_4)=0$, choosing $b_4$ such that $d_0 b_4=d_1^{(r)}b_2+a_4$, and setting $c_3=d_1^{(r)}c_2$, $c_4=d_1^{(s)}b_4$.
Then we check $d_0c_4=c_3$, and so we have defined our sequence $c_k$ up to $k=4$. We continue like this up to constructing
$c_{2i-2}=d_1^{(s)}b_{2i-2}$, where $d_0b_{2i-2}=d_1^{(r)}b_{2i-4}+a_{2i-2}$. So we have
\[
d_0c_{2i-2}=c_{2i-3}=d_1^{(r)} c_{2i-4}.
\]
We finish by setting
\[
c_{2i-1}=d_1^{(r)}c_{2i-2}, \qquad b_{2i-1}=-d_1^{(r)}b_{2i-2}.
\] 
It is now clear that $c_0,\dots,c_{2i-1}$ is a $d_i^{(r)}$-sequence. On the other hand, 
\[
d_0b_{2i-1}= -d_0d_1^{(r)}b_{2i-2}=d_1^{(r)}d_0b_{2i-2}=d_1^{(r)}a_{2i-2}=a_{2i-1},
\]
and
\[
d_1^{(s)}b_{2i-1}=-d_1^{(s)}d_1^{(r)}b_{2i-2}=d_1^{(r)}d_1^{(s)}b_{2i-2}=d_1^{(r)}c_{2i-2}=c_{2i-1},
\]
as required.
\end{proof}

We can now finish the proof of Proposition \ref{diff} in the general case. We start with a $d_i^{(r)}$-sequence $a=a_0,a_1,\dots,a_{2i-1}$ computing
$d_i^{(r)}a$.  We also have a $d_j^{(s)}$-sequence $a=x_0,x_1,\dots,x_{2j-1}$ computing $d_j^{(s)}a$. We first push forward the sequence $a_k$ using Lemma \ref{pushout}, to get a $d_i^{(r)}$-sequence from $x_1$ to $-d_1^{(s)}a_{2i-1}$. We continue by doing the two-step push forward of Lemma \ref{pushout2} $j-1$ times. Here at the $i$th application of Lemma \ref{pushout2}, the corresponding $b_0$ is $x_{2i}$.

We end up getting a $d_i^{(r)}$-sequence that can be used to compute $d_i^{(r)}x_{2j-1}=d_i^{(r)}d_j^{(s)}a$. On the other hand, the right ends of the pushed out sequences form a $d_j^{(s)}$-sequence that can be used to compute $d_j^{(s)}(-a_{2i-1})=-d_j^{(s)}d_i^{(r)}a$.
This implies that $d_j^{(s)}d_i^{(r)}a=-d_i^{(r)}d_j^{(s)}a$, as claimed.
\end{proof} 

We note that our construction also gives the higher differentials of the hypercohomology spectral sequence (\ref{hyper}). Namely, these higher differentials are also induced by $d_0$ and $d_1$, in the same way as in our construction. The special feature of our situation is that all the higher differentials are constructed on the first sheet $E_1$ of the spectral sequence, while in general $d_r$ can only be defined on $E_r$.

To see that the degree in the singular BGG complex we have constructed is well defined, one can use the fact that the points of the relative Hasse diagram involved in the singular BGG complex follow the pattern of the regular Hasse diagram for the Enright-Shelton category, as was explained in Section \ref{sec:BGG} (see Remark \ref{enright-shelton}).

\begin{lem}
The differentials $d_i$ are differential operators. Their orders
depend on parameters $\mu$ as well as on the choice of the singularity
set $S.$ 
\end{lem}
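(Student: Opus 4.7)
The plan is to separate the two assertions of the lemma: that each $d_i$ is a differential operator, and that its order depends nontrivially on both $\mu$ and the singularity set $S$.

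For the first assertion, I would show that $d_i$ is simultaneously $G$-invariant and local on $U$. Invariance is immediate, because $d_0$, $d_1^{(r)}$ and the whole zigzag construction in (\ref{diseq}) are built from $G$-equivariant ingredients on the correspondence space. Locality follows from the fact that the preimages $a_{2k}$ in (\ref{diseq}) can be chosen locally on $U$ wherever the intermediate $d_0$-cohomology vanishes, which is precisely the hypothesis that was imposed when we defined $d_i$; by Lemma \ref{defdi} the class $[a_{2i-1}]$ does not depend on these choices, so local definitions patch into a well-defined sheaf morphism on $U$. Any $G$-invariant local linear map between sections of homogeneous vector bundles over $G/P$ is a differential operator: this is the standard consequence of Peetre's theorem combined with transitivity of the $G$-action, equivalently the Frobenius identification of such operators with $U(\frg)$-homomorphisms of the associated generalized Verma modules. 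This step is essentially routine once the zigzag machine of Section \ref{sec:higher} is in place.

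For the second assertion, I would first handle the bottom case $\mu=\rho$, where the relative BGG complex is, fiberwise, the de Rham complex twisted by a line bundle, so every horizontal $d_1^{(r)}$ has order one; then the zigzag producing $d_i$ uses exactly $i$ applications of $d_1^{(r)}$, and the resulting operator has order $i$, as already noted after Theorem \ref{thmintro}. For general $\mu$ the orders of the horizontal $d_1^{(r)}$'s are controlled by the weight differences along the edges of the relative Hasse diagram, and Proposition \ref{jumps} makes this explicit: the drop $v-s$ across a multi-step edge counts exactly how many elements of the repeated-coordinate set $I$ are interleaved between two consecutive non-repeated coordinates, and this interleaving pattern is what is recorded by $S$ together with the non-repeated coordinates of $\mu$. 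The four worked cases at the end of Section \ref{sec:BGG} (all with $n=8$, $k=4$, $l=2$) display this concretely: the overall Hasse pattern stays fixed in accordance with Enright--Shelton, yet the orders of the resulting $d_i$, and even whether they are ``standard'' (order one in the spectral sequence) or ``nonstandard'' (higher order), change with $\mu$ and $S$. Exhibiting two examples whose orders differ already proves the dependence claim.

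The step I expect to be the main obstacle is the locality part of the first assertion. Although Lemma \ref{defdi} shows that $[a_{2i-1}]$ is independent of the intermediate zigzag choices, one still has to verify that local constructions on overlapping open subsets of $U$ patch to a globally defined sheaf morphism. This is exactly what is guaranteed by the vanishing of the intermediate $d_0$-cohomology, but making it airtight requires careful bookkeeping of the sheaf-theoretic Penrose setup \`a la \cite{BE,CS1,CS2}; once this patching is in hand, the rest of the argument reduces to $G$-invariance and the standard invariant-differential-operator dictionary, and the order statement follows by inspection of Proposition \ref{jumps} and the examples of Section \ref{sec:BGG}.
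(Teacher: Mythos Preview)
Your approach is correct, but it diverges from the paper's proof in two places worth noting.

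For locality, you plan to patch local zigzag choices via Lemma \ref{defdi}, and you correctly flag the bookkeeping as the main obstacle. The paper sidesteps this entirely: it observes that the whole Penrose transform construction (double fibration, relative BGG resolution, spectral sequence) can be run over any small ball $U'\subset U$ in place of the big cell, with $V'=\tau^{-1}(U')$ and $W'=\eta(V')$. Hence the resulting operators are automatically local, with no patching argument needed. This is strictly simpler than what you propose, and it dissolves exactly the step you were worried about.

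For finiteness of order and the dependence claim, the paper does not go through Peetre plus transitivity. Instead, after invoking \cite{KMS} to get that local operators are differential (possibly of infinite order), it bounds the order explicitly using the grading element $E\in\gog_0$: restricting to an irreducible piece $V_{\nu_1}\to V_{\nu_2}$, the order equals $\nu_1(E)-\nu_2(E)$. This single formula gives finiteness and simultaneously encodes the dependence on $\mu$ and $S$, since the weights $\nu_1,\nu_2$ appearing in the complex are determined by $\mu$ and $S$. Your route via Proposition \ref{jumps} and the explicit examples of Section \ref{sec:BGG} is a legitimate alternative for the dependence assertion, and in fact makes the variation with $S$ more visibly concrete than the paper's terse formula; but it does not give the clean order formula, and your Peetre-plus-invariance step for finite order needs a word about why transitivity on the big cell (under the opposite unipotent) suffices, whereas the grading-element argument handles this in one line.
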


\begin{proof}
	
The construction of the Penrose transform was shortly reviewed   in Section \ref{sec:intro}. Instead of the big cell, it is possible to start with
any (small) ball $U$ in it,  and to define $V=\tau^{-1}(U)$ and $W=\eta(V).$ So the operators given by the differential in the resulting complex are local operators. Hence it is well known (see e.g. \cite{KMS}) that they are given
by differential operators, possibly of infinite order. Hence they are induced by homomorphisms from the jet bundles (possibly of infinite order). But then their order
is controlled by their generalized conformal weights. These are defined as follows. Suppose that we restrict the differential $d$ to one irreducible
piece (i.e., to the bundle induced by an irreducible $L$-module $V_{\nu_1}$ with values in another irreducible piece $V_{\nu_2}$). Suppose that
$E\in\gog_0$ is the grading element for $\gog.$ Then the order
of the corresponding differential operators is given by the difference
$\nu_1(E)-\nu_2(E).$
\end{proof}

\section{Exactness of the BGG complex}
\label{sec:exactness}

In this section we will prove that our singular BGG complex is exact except in degree 0, so we have obtained a resolution of the kernel of the first differential operator in the complex. More precisely, we will prove exactness on the big cell only, not at the sheaf level.

Recall the double fibration (\ref{doublefibcell})
and the hypercohomology spectral sequence (\ref{hyper}).
By remarks from the end of Section \ref{sec:higher}, passing through this spectral sequence gives exactly the global sections of the cohomology of our singular BGG complex, with a shift in degree.  

The shift in degree comes from the fact that the degree 0 point in our singular BGG complex is of degree $p$ inside the relative BGG resolution, and has vertical degree $q$ (specified by the number written over the point), with $p+q=l(k-l)$. (Recall that the Levi subgroup of $P$ is $S(GL(k)\times GL(n-k))$, while 
the Levi subgroup of $Q$ is $S(GL(l)\times GL(n-l))$.) 

We prove this in the following Lemma.

\begin{lem}
\label{shift}
The smallest $p+q$ for which $\Gamma(X,\tau_*^q\Delta^p(\mu))\neq 0$ is equal to $l(k-l)$.
\end{lem}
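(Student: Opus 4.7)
The plan is to compute $p$ and $q$ at every vertex of the relative Hasse diagram as explicit inversion counts, then bound $p+q$ from below combinatorially and exhibit a vertex where the bound is attained.

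First, I would identify which vertices $(\tilde\mu_1|K|L)$ contribute nonzero direct image. The fiber of $\tau\colon G/(P\cap Q)\to G/P$ is the Grassmannian $G(l,k)$ of $l$-planes inside the $k$-plane stabilized by $P$, so Bott--Borel--Weil applied fiberwise says that $\tau_*^q$ of the bundle at $(\tilde\mu_1|K|L)$ is nonzero exactly when $(i_1,\dots,i_l,k_1,\dots,k_{k-l})$ is regular, i.e.\ when $K\cap I=\emptyset$, equivalently $K\subseteq J$ (in agreement with the labelling in the examples of Section \ref{sec:BGG}). In that case $q$ is the number of inversions of the concatenation $(i_1,\dots,i_l,k_1,\dots,k_{k-l})$ with respect to the descending order, which yields $q=\#\{(r,s)\colon i_r<k_s\}$. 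The relative BGG degree $p$ is the length of the shuffle in $S_{n-l}/(S_{k-l}\times S_{n-k})$ carrying the initial top-left vertex to $(K|L)$, namely $p=\#\{(x,y)\in K\times L\colon x<y\}$.

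The key combinatorial computation is then direct. Assuming $K\subseteq J$, we have $I\subseteq L$, and splitting $L=I\sqcup(J\setminus K)$ gives
\[
p\;=\;\#\{(x,y)\in K\times I\colon x<y\}\;+\;\#\{(x,y)\in K\times(J\setminus K)\colon x<y\}.
\]
Adding $q=\#\{(x,y)\in K\times I\colon x>y\}$, and using $K\cap I=\emptyset$ to exclude equalities, the first summand of $p$ combines with $q$ to account for all of $K\times I$, so
\[
p+q\;\geq\;|K\times I|\;=\;(k-l)\cdot l.
\]
Equality forces the second summand of $p$ to vanish, which happens exactly when $K$ consists of the $k-l$ largest elements of $J$. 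Such a $K$ exists, since the size condition $k-l\le n-2l$ follows from $l\le k\le n/2$, and the corresponding vertex realizes $p+q=l(k-l)$ with nonvanishing direct image.

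The main (minor) obstacle is bookkeeping: verifying that the vertex parameterization really gives the Weyl length for $p$ (a shuffle in the base direction) and the Bott--Borel--Weil length for $q$ (a shuffle in the fiber direction), and that the two inversion counts are written with respect to the same ordering convention so they combine cleanly in the key inequality.
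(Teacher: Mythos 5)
Your proof is correct, but it takes a genuinely different route from the paper. The paper argues by induction on $l$: starting from the initial vertex of the relative Hasse diagram, it tracks a greedy sequence of moves that first frees the middle group of the repeated coordinates and then sorts the first two groups, showing the total count of moves is $l(k-l)$. You instead write down closed-form inversion counts at \emph{every} contributing vertex: $p=\#\{(x,y)\in K\times L\colon x<y\}$ for the relative BGG degree and $q=\#\{(x,y)\in K\times I\colon x>y\}$ for the Bott--Borel--Weil degree, and then the identity
\[
p+q \;=\; |K|\,|I| \;+\; \#\{(x,y)\in K\times(J\setminus K)\colon x<y\}\;=\;l(k-l)+(\text{nonnegative})
\]
does all the work. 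Your bookkeeping checks out against the examples of Section~\ref{sec:BGG} (e.g.\ for $(52|43|5210)$ one gets $p=2$, and for $(5243|5210)$ one gets $q=2$, matching the labels there), and the existence of the minimizing vertex only needs $k-l\le n-2l$, which follows from $l\le k\le n/2$ as you note. Your approach buys two things the paper's induction does not make explicit: it establishes the lower bound $p+q\ge l(k-l)$ uniformly over all vertices with nonvanishing direct image (rather than along one chosen path, so there is no worry that minimizing $p$ first might not minimize $p+q$), and it shows the minimum is attained at exactly one vertex, namely the one where $K$ consists of the $k-l$ largest non-repeated values --- a fact that is implicitly used later when the paper refers to ``the degree $0$ point'' of the singular BGG complex. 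The paper's induction, on the other hand, avoids setting up the explicit length formulas for the relative Hasse diagram and for Bott--Borel--Weil, at the cost of a less transparent argument.
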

\begin{proof} The starting weight in the relative BGG resolution is 
\eq
\label{startwt}
(i_1\dots i_l|k_* i_1 k_* i_2\dots i_r k_*|l_1 l_2 \dots)
\eeq
Here we denoted by $k_*$ the groups of non-repeated coordinates separating the repeated coordinates $i_1,\dots,i_r$. Let the sizes of these groups be respectively $x_1,x_2,\dots,x_{r+1}$; each $x_i$ is between 0 and $k'=k-l$, and their sum is
\[
x_1+\dots+x_{r+1} = k'-r.
\]
Now $p$ is the (minimal) number of arrows in the Hasse diagram that are required to free the second group from all the $i$s. After this is achieved, $q$ is  the (minimal) number of transpositions of neighbors in the union of the first two groups required to bring the coordinates into descending order.

We prove the assertion $p+q=lk'$ by induction on $l$; here we work for all 1-graded maximal parabolic subgroups in all $SL(n)$ simultaneously, i.e., the size of the weights need not stay the same throughout the argument.

We start with $l=0$. In this case it is clear that $p=q=0$, so the assertion trivially holds.

Assume now that we know the assertion in case there are $l-1$ repeated coordinates, and let us prove it in case there are $l$ repeated coordinates. The starting weight is as in (\ref{startwt}). In case $r=0$, i.e., there are no $i$ coordinates in the second group, we have $p=0$, and $q=lk'$ as all the $k$s need to be permuted in front of all the $i$s. Thus the assertion holds in this case.

Suppose now $r\geq 1$, and rewrite (\ref{startwt}) as
\eq
\label{startwt1}
(i_1\dots i_l|k_1\dots k_{x_1}i_1 u_1\dots u_{k'-x_1-1}|l_1 l_2\dots)
\eeq
We have change the notation because some of the $u_i$ can be among the $k$s and some among the $i$s, but we do not need to distinguish between them 
at this point.

In order to free the second group of the $i$s, we may start by moving $i_1$ into the third group. This requires $k'-x_1$ moves: we first switch $u_{k'-x_1-1}$ with $l_1$, then we switch $u_{k'-x_1-1}$ with $u_{k'-x_1-2}$, and so on, until we switch $i_1$ with $u_1$.
After this we can still have a number of $i$s in the second group (either $r-1$ or $r$, depending on $l_1$), but all of them are smaller than $i_1$ which is now
in the first position of the third group. Moving all these $i$s into the third group will be the same as the analogous moving if we delete $i_1$ from both of its
current positions. This allows us to use the inductive assumption, and conclude that we need $(l-1)k'$ moves to free the second group of the $i$s and then permute the union of the first two groups into descending order. After this we return $i_1$ to both of its positions; now we still need to move
$k_1,\dots,k_{x_1}$, which were not touched by any of the previous moves, past $i_1$. This requires $x_1$ moves, and so the total number of required moves is
\[
k'-x_1+(l-1)k'+x_1 = lk',
\]
as claimed.
\end{proof}

Since the big cell $U$ is Stein, taking global sections commutes with taking cohomology of a complex. So it is enough to see that $E_\infty^{pq}$ vanishes in the degrees above $l(k-l)$. 

This will follow once we prove the following theorem.

\begin{thm}
\label{vanishingcoho}
The cohomology of all coherent holomorphic sheaves on $W$ vanishes above the degree $l(k-l)$.
\end{thm}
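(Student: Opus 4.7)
The strategy is to identify $W$ geometrically as the total space of a holomorphic vector bundle over a projective Grassmannian, and then to apply the Andreotti--Grauert vanishing theorem.

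First, I would make the geometry of $W$ explicit. The big cell $U\subset G/P = G(k,n)$ consists of the $k$-planes $L_2 \subset \bbC^n$ transverse to a fixed $(n-k)$-plane $F$; fix a complement $F'\cong \bbC^k$ so that $\bbC^n = F'\oplus F$. Tracing through (\ref{doublefibcell}), a flag $(L_1\subset L_2)$ lies in $V = \tau^{-1}(U)$ precisely when $L_2\cap F = 0$, and such an $L_2$ extending a given $L_1$ exists iff $L_1\cap F = 0$, so
\[
W = \{L_1\in G(l,n) : L_1\cap F = 0\}.
\]
The projection $\pi_{F'}\colon \bbC^n\to F'$ restricts injectively to each such $L_1$, giving a holomorphic map $\pi\colon W\to G(l,F') = G(l,k)$ whose fiber over $L'$ consists of the graphs of linear maps $L'\to F$. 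Hence $W$ is the total space of the holomorphic vector bundle $\Hom(\mathcal{S},F)$ of rank $l(n-k)$ on the projective variety $G(l,k)$ of complex dimension $l(k-l)$, where $\mathcal{S}$ is the tautological sub-bundle; in particular $\dim_{\bbC} W = l(n-l)$.

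Second, I would apply Andreotti--Grauert. Equip $\Hom(\mathcal{S},F)$ with any Hermitian metric and put $\varphi(v) = |v|^2$. Since $G(l,k)$ is compact and $|v|^2$ is exhausting on each fiber, $\varphi$ is a smooth exhaustion of $W$. At every point of $W$, the restriction of the Levi form $i\partial\bar\partial\varphi$ to the $l(n-k)$-dimensional space of fiber tangent vectors is the Hermitian inner product, hence positive definite. By the Cauchy interlacing principle for Hermitian forms, the full Levi form then has at least $l(n-k) = \dim_{\bbC} W - l(k-l)$ positive eigenvalues at every point. This exhibits $W$ as $(l(k-l)+1)$-complete in the sense of Andreotti--Grauert, and their theorem yields
\[
H^p(W,\mathcal{F}) = 0 \quad\text{for every coherent analytic } \mathcal{F} \text{ and every } p\geq l(k-l)+1,
\]
which is the claim.

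The most delicate point is the count of positive eigenvalues of $i\partial\bar\partial\varphi$ at points of the zero section, where $\varphi$ vanishes to second order and the pure-base block of the Levi form is zero; this is handled uniformly by Cauchy interlacing, which counts positive eigenvalues from a positive definite restriction without any condition on the complementary block. Should a more elementary argument be preferred, the Leray spectral sequence for the Stein affine bundle $\pi\colon W\to G(l,k)$ is an alternative: Cartan's Theorem B applied fiberwise gives $R^{>0}\pi_*\mathcal{F} = 0$ for coherent $\mathcal{F}$, and the natural fiberwise polynomial grading on $\pi_*\mathcal{O}_W$ exhibits $\pi_*\mathcal{F}$ as a direct limit of coherent sheaves on $G(l,k)$, so its cohomology vanishes above the complex dimension $l(k-l)$ of the base.
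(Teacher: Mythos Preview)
Your proof is correct and takes a genuinely different route from the paper's. Both you and the paper begin by identifying the same fibration $\pi\colon W\to G(l,k)$ with affine fibers (your description of $W$ as $\{L_1\in G(l,n):L_1\cap F=0\}$ and as the total space of $\Hom(\mathcal{S},F)$ matches the paper's Lemma~\ref{steincover}). From there the arguments diverge. The paper pulls back the standard affine cover $\{V_I\}$ of $G(l,k)$ to a Stein cover $\{V_I\times F\}$ of $W$, totally orders the index sets $I$ by the degree $|I|=\sum i_r-\tfrac{l(l+1)}{2}$, and runs an inductive Mayer--Vietoris argument: at each step a combinatorial lemma about Pl\"ucker coordinates (Lemma~\ref{equations}) shows that the relevant intersection is the complement of a set cut out by exactly $|I|$ equations in a Stein space, so Proposition~\ref{stein}(3) bounds its cohomology. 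Your argument instead invokes Andreotti--Grauert directly via the fibrewise norm-squared exhaustion; the eigenvalue count from Courant--Fischer on the fiber directions is clean and correct. Your approach is shorter and more conceptual, at the cost of importing a substantial analytic theorem; the paper's argument is entirely elementary within Stein theory and has a pleasant combinatorial flavor, and its Mayer--Vietoris mechanism may be easier to transport to other parabolic geometries where a vector-bundle description of $W$ is less transparent.

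One caution about your alternative Leray argument: the vanishing $R^{>0}\pi_*\mathcal F=0$ is fine, but $\pi_*\mathcal F$ is not coherent, so the step ``its cohomology vanishes above $\dim_\bbC G(l,k)$'' needs justification. Your sketch via a direct limit of coherent subsheaves is the right idea, but in the analytic category one should say explicitly why cohomology on the compact base $G(l,k)$ commutes with that direct limit. Since your main Andreotti--Grauert line already settles the theorem, this is a minor point.
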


Before proving Theorem \ref{vanishingcoho}, we recall some standard notation and conventions about Grassmannians. The Grassmannian of $l$-planes in $\bbC^k$, denoted by $G(l,k)$, is a smooth projective variety. The Stiefel variety $S(l,k)$ is a principal $GL(l,\bbC)$-bundle over $G(l,k)$, with elements that can be idenitified with $k\times l$ matrices of rank $l$, and the $GL(l,\bbC)$-action is given by right multiplication. The projection from $S(l,k)$ to $G(l,k)$ takes each such matrix to the $l$-dimensional subspace of $\bbC^k$ spanned by the columns of the matrix.  
Thus elements of $G(l,k)$ are $k\times l$ matrices of rank $l$, modulo the action of $GL(l,\bbC)$. 

The rank condition can be expressed in terms of $l\times l$ minors $d_I$, where
\[
I=\{i_1,\dots,i_l\},\qquad 1\leq i_1<i_2<\dots<i_l\leq k,
\]
and $d_I$ is the determinant of the $l\times l$ matrix obtained by choosing the rows according to $I$. The condition is of course that $d_I\neq 0$ for 
at least one $I$.  The numbers $d_I$ are not completely determined by  the point $x$; namely, under the action of $g\in GL(l)$, they are all multiplied by the same scalar $\det g$. Thus the coordinates $(d_I)$, called the Pl\"ucker coordinates, define a point in the projective space $\bbP(\bigwedge^l(\bbC^k))$. In this way $G(l,k)$ is embedded into 
$\bbP(\bigwedge^l(\bbC^k))$, and the image is defined by certain equations called Pl\"ucker relations. We are not writing these equations down, because we will not need them explicitly.

If some $d_I\neq 0$ for the point $x$, we can assume, by renumbering the coordinates, that $I=\{1,\dots,l\}$. Then we can use the $GL(l)$-action to find a unique representative of $x$ of the form
\[
x=\begin{pmatrix} I_l \cr X
           \end{pmatrix},
\]
where $X$ is a $(k-l)\times l$ matrix. Conversely, any $(k-l)\times l$ matrix defines a unique point in the open subset $V_I$ of $G(l,k)$ defined by the condition $d_I\neq 0$. In this way we have covered $G(l,k)$ by $\binom{k}{l}$ open sets, each of which is isomorphic to the affine space $\bbC^{l(k-l)}$. 

We now return to our double fibration (\ref{doublefibcell})
\[
\xymatrix{
& & & & & V \ar[rd]^\tau\ar[ld]_\eta & & & &  \\
& & & &   W   & &   U   & &  \\ 
}
\]

with $U$ the big cell in $G/P$. 

Rearranging the variables, we can assume that $U$ consists of the $k$-planes $L_2$ in 
$\bbC^n$  which correspond to $n\times k$ matrices with the top $k\times k$ minor regular, and thus we can parametrize $U$ by matrices
\[
\begin{pmatrix} I_ k \cr
                         X
           \end{pmatrix}
\]
with $X$ an arbitrary $(n-k)\times k$ matrix. The points of $V$ are the partial flags 
\[
0\subset L_1\subset L_2\subset \bbC^n
\]
with $L_2$ as above and with $L_1$ an $l$-plane contained in $L_2$. If $L_1$ is given in homogeneous coordinates by an $n\times l$ matrix $Y$, then the columns of $Y$ must be linear combinations of the columns of the matrix $\begin{pmatrix} I_ k \cr  X \end{pmatrix}$ corresponding to $L_2$. In other words, there is a $k\times l$ matrix $Z$ such that
\[
Y=\begin{pmatrix} I_ k \cr
                         X
           \end{pmatrix} Z =
\begin{pmatrix} Z \cr
                         XZ
           \end{pmatrix}.
\]
Since $Y$ is of rank $l$, it follows that the rank of $Z$ must also be $l$, and so $Z$ defines a point in the Grassmanian $G(l,k)$ of $l$-planes in $\bbC^k$.
Note also that $Z$ is clearly uniquely determined by the point $Y=\begin{pmatrix} Z \cr XZ  \end{pmatrix}$.

We now define a map $\pi:W\to G(l,k)$ by setting 
\[
\pi\begin{pmatrix} Z \cr XZ  \end{pmatrix}=Z,\qquad \begin{pmatrix} Z \cr XZ  \end{pmatrix}\in W.
\]
Then $\pi$ is surjective by the above considerations; namely, for any $Z$ of rank $l$, we can construct $Y=\begin{pmatrix} Z \cr XZ  \end{pmatrix}\in W$, coming from a point in the fiber $\tau^{-1}\begin{pmatrix} I_k \cr X  \end{pmatrix}$. 

Let now $V_I$ be any of the affine spaces covering $G(l,k)$ as above, i.e., $V_I$ is given by the inequality $d_I\neq 0$. We claim that  
\[
\pi^{-1}(V_I) = V_I\times F,
\]
where $F$ is an affine space. In particular, $\pi$ is a locally trivial fibration.

To see this, we first renumber the coordinates, so that $I$ becomes $\{1,\dots,l\}$. Then every point in $V_I$ has non-homogeneous coordinates $\begin{pmatrix} I_l \cr Z'  \end{pmatrix}$
for some $(k-l)\times l$ matrix $Z'$. 

The fiber over the point  $\begin{pmatrix} I_l \cr Z'  \end{pmatrix}$ consists of points 
\[
 \begin{pmatrix} I_l \cr Z' \cr X \begin{pmatrix} I_l \cr Z'  \end{pmatrix} \end{pmatrix},
\]
with $X$ ranging over $(n-k)\times k$ matrices. Writing $X= \begin{pmatrix} X_1 & X_2  \end{pmatrix}$ with $X_1$ of size $(n-k)\times l$ and $X_2$ of size 
$(n-k)\times (k-l)$, we see that this fiber consists of points 
\[
 \begin{pmatrix} I_l \cr Z' \cr X_1+X_2Z' \end{pmatrix}.
\]
It follows that every point in the fiber is obtained for $X= \begin{pmatrix}X_1 & 0 \end{pmatrix}$,
with $X_1$ uniquely determined, and thus the fiber is the affine space $F$ of 
$(n-k)\times l$ matrices. The same $X_1$ describes the fibers over all points
$\begin{pmatrix} I_l \cr Z'  \end{pmatrix}$, so we conclude that $\pi^{-1}(V_I)=V_I\times F$, as claimed. We have proved

\begin{lem}
\label{steincover}
There is a locally trivial fibration $\pi:W\to G(l,k)$, such that if $V_I\subset G(l,k)$ are the affine spaces $d_I\neq 0$ described above, then
\[
\pi^{-1}(V_I)=V_I\times F,
\]
with the fiber $F$ isomorphic to an affine space.
\end{lem}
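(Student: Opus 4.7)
The plan is to define $\pi$ intrinsically and then verify local triviality on the standard Pl\"ucker charts via direct matrix manipulations, largely formalizing what has already been sketched in the paragraphs preceding the statement. First, I would define $\pi: W \to G(l,k)$ as follows. A point of $W$ is an $l$-plane $L_1 \subset \bbC^n$ contained in some $L_2 \in U$. Writing $L_2$ in the form $\begin{pmatrix} I_k \cr X \end{pmatrix}$, any set of $l$ generators of $L_1$ inside $L_2$ gives homogeneous coordinates for $L_1$ of the form $\begin{pmatrix} Z \cr XZ \end{pmatrix}$ with $Z$ a $k \times l$ matrix; set $\pi(L_1) := [Z] \in G(l,k)$. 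The key observation is that $Z$ is just the top $k$ rows of any homogeneous coordinate matrix for $L_1$, so it is intrinsic up to the right $GL(l,\bbC)$-action, independently of the choice of $L_2 \supset L_1$. Moreover, $Z$ has rank $l$ because the inclusion $L_1 \subset L_2$ identifies $L_1$ isomorphically with an $l$-subspace of $L_2 \cong \bbC^k$, so the image of $\pi$ really lies in $G(l,k)$.

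Second, I would fix a Pl\"ucker chart $V_I = \{d_I \neq 0\} \subset G(l,k)$ and, after renumbering coordinates of $\bbC^k$, assume $I = \{1, \dots, l\}$. Then every point of $V_I$ has a unique representative $\begin{pmatrix} I_l \cr Z' \end{pmatrix}$ with $Z'$ a $(k-l) \times l$ matrix, and I claim that every element of $\pi^{-1}(V_I)$ has a unique matrix representative of the shape $\begin{pmatrix} I_l \cr Z' \cr B \end{pmatrix}$ for some $B \in \bbC^{(n-k) \times l}$. Given this, the assignment $L_1 \mapsto (Z', B)$ yields a biholomorphism $\pi^{-1}(V_I) \to V_I \times F$ with $F \cong \bbC^{(n-k) l}$, manifestly compatible with the projection to $V_I$. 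The one check is that each such $L_1$ actually belongs to $W$: setting $X := \begin{pmatrix} B & 0 \end{pmatrix}$, where the zero block has size $(n-k) \times (k-l)$, produces $L_2 \in U$ with $\begin{pmatrix} I_k \cr X \end{pmatrix} \begin{pmatrix} I_l \cr Z' \end{pmatrix} = \begin{pmatrix} I_l \cr Z' \cr B \end{pmatrix}$, confirming $L_1 \subset L_2$.

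Third, letting $I$ vary assembles these local product decompositions into a locally trivial fibration: the $V_I$ cover $G(l,k)$, each preimage $\pi^{-1}(V_I)$ splits as a product of $V_I$ with an affine space of the same dimension, and the transition maps on overlaps are polynomial, coming from the standard Pl\"ucker transitions on $G(l,k)$ together with the affine coordinate changes on the fiber. This verifies all claims of the lemma.

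I do not expect any serious obstacle; the statement is essentially bookkeeping in matrix coordinates on the partial flag and Grassmann varieties involved, and most of the computational work has already been done in the preceding paragraphs. The only mild subtlety is keeping the fiber identification canonical, since a given $L_1 \in W$ can be contained in many $L_2 \in U$. The explicit parametrization $(Z', B)$ bypasses this issue by effectively singling out the distinguished choice $X = \begin{pmatrix} B & 0 \end{pmatrix}$, which makes both the well-definedness of $\pi$ and the product structure on $\pi^{-1}(V_I)$ transparent.
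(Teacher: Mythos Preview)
Your proposal is correct and follows essentially the same approach as the paper: the paper's proof is the discussion immediately preceding the lemma statement, and it defines $\pi$ by extracting the top $k$ rows, normalizes on the chart $V_I$ with $I=\{1,\dots,l\}$, and identifies the fiber with $(n-k)\times l$ matrices via the distinguished choice $X=(X_1\ 0)$, exactly as you do with your $B$. Your write-up is slightly more explicit about the well-definedness of $\pi$ and the holomorphicity of transitions, but there is no substantive difference.
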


We need some basic properties of Stein spaces, which for our purposes may be defined as holomorphic manifolds on which the cohomology of all coherent sheaves vanishes in all positive degrees. All of the following facts, and much more, can be found in \cite{GR}.

\begin{prop} 
\label{stein}
\begin{enumerate}
\item The affine space $\bbC^n$ is Stein.
\item If $X$ is a Stein space, and if $Y\subset X$ is the complement of the zero set of a holomorphic function $f$ on $X$, then $Y$ is Stein.
\item If $X$ is a Stein space, and if $Y\subset X$ is the complement of the set of common zeros of $k$ holomorphic functions on $X$, then the cohomology of any coherent holomorphic sheaf on $Y$ vanishes in degrees larger than $k-1$.
\end{enumerate}
\end{prop}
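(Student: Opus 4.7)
The plan is to prove the three items in order, with (1) a direct verification from the definition, (2) a reduction to the standard fact that closed analytic subvarieties of Stein spaces are Stein, and (3) a \v Cech cohomology calculation using a Stein cover built from (2). None of this is new; our role is just to assemble the pieces from \cite{GR} in the form we need.

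For (1), I would check the two defining axioms of a Stein manifold directly for $X=\bbC^n$. The coordinate functions $z_1,\dots,z_n$ separate points and their differentials span the cotangent space everywhere, giving global holomorphic separation and local coordinates. Holomorphic convexity is immediate: for any compact $K\subset\bbC^n$, the holomorphically convex hull $\widehat K$ is contained in the closed polydisc of polyradius $(\sup_K|z_1|,\dots,\sup_K|z_n|)$, hence is compact.

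For (2), the plan is the classical ``graph trick''. Since $f$ is nowhere zero on $Y$, the map $\iota\colon Y\to X\times\bbC$ defined by $\iota(y)=(y,1/f(y))$ realizes $Y$ as the closed analytic subvariety of $X\times\bbC$ cut out by the single equation $zf(x)-1=0$. Finite products of Stein spaces are Stein (the structural properties multiply), so $X\times\bbC$ is Stein by (1); and closed analytic subvarieties of Stein spaces are Stein by Cartan's theorem. Therefore $Y$ is Stein.

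For (3), write $Y=\bigcup_{i=1}^k U_i$ with $U_i=X\setminus\{f_i=0\}$. By (2), each $U_i$ is Stein. Every finite intersection $U_{i_1}\cap\dots\cap U_{i_m}$ is the complement in $X$ of the zero locus of the single function $f_{i_1}\cdots f_{i_m}$, so it is Stein by (2) as well. Thus $\{U_i\}_{i=1}^k$ is a Stein (hence Leray, by Cartan's Theorem~B) cover of $Y$ for any coherent holomorphic sheaf $\mathcal F$. The associated \v Cech complex has terms indexed by subsets of $\{1,\dots,k\}$, so it vanishes in degrees $\ge k$; by Leray, $H^p(Y,\mathcal F)=\check H^p(\{U_i\},\mathcal F)=0$ for $p\ge k$, i.e. in all degrees larger than $k-1$. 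The only genuinely nontrivial ingredient here is the classical fact invoked in (2)---that closed analytic subvarieties of Stein spaces are Stein---which rests on Cartan's Theorems~A and~B for coherent ideal sheaves; everything else is formal once that is in hand.
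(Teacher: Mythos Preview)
Your proof is correct and complete. The paper does not actually prove this proposition: it merely states these as standard facts, with the remark that ``All of the following facts, and much more, can be found in \cite{GR}.'' So there is nothing to compare against on the paper's side.

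Your arguments are the expected ones: the direct verification for $\bbC^n$, the graph trick $y\mapsto(y,1/f(y))$ embedding $Y$ as a closed analytic subset of the Stein space $X\times\bbC$, and the \v Cech argument with the Stein cover $U_i=X\setminus\{f_i=0\}$ (whose intersections are Stein since $U_{i_1}\cap\cdots\cap U_{i_m}=X\setminus\{f_{i_1}\cdots f_{i_m}=0\}$), together with Leray's theorem via Cartan~B. One small note: the paper explicitly adopts, ``for our purposes'', the definition of a Stein space as a holomorphic manifold on which all coherent sheaf cohomology vanishes in positive degree. With that as the \emph{definition}, part (1) is literally Cartan's Theorem~B for $\bbC^n$, and your axiom-checking in (1) is verifying the classical definition rather than the paper's working one; the two are of course equivalent, but it may be worth a sentence acknowledging which definition you are using and why the equivalence is available.
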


We would now like to combine Lemma \ref{steincover} and 
Proposition \ref{stein}(3) to conclude the vanishing claimed in 
Theorem \ref{vanishingcoho}. 
However the number of the affine sets $V_I\times F$ covering $W$ is too big.

To get around this difficulty, we totally order the sets $I$, first by degree and then lexicographically within a fixed degree. The degree of $I=\{i_1,\dots,i_l\}$ is defined as
\[
|I|=\sum_{j=1}^l i_j - \frac{l(l+1)}{2}.
\]
The degree of $I$ varies from 0 (for $I=\{1,\dots,l\}$) to $l(k-l)$ (for $I=\{k-l+1,\dots,k\}$). 
For any $I$ as above, we define  
\[
U_I=\bigcup_{J\leq I} V_J\times F.
\]
Since $U_{\{k-l+1,\dots,k\}}=W$ and
$|\{k-l+1,\dots,k\}|=l(k-l)$, the following lemma implies Theorem \ref{vanishingcoho}.

\begin{lem} 
The cohomology of any coherent holomorphic sheaf on $U_I$ 
vanishes in degrees above $|I|$.
\end{lem}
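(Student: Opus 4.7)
The strategy is to proceed by induction on $I$ with respect to the total ordering on index sets (first by $|\cdot|$, then lexicographically). For the base case $I = \{1,\dots,l\}$ one has $|I| = 0$ and $U_I = V_I \times F$ is an affine space, hence Stein by Proposition \ref{stein}(1), so the cohomology of any coherent sheaf vanishes in all positive degrees, which is exactly what is needed.

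For the inductive step, let $I'$ denote the predecessor of $I$ in the ordering and write $U_I = U_{I'} \cup (V_I \times F)$ as a union of two open subsets. The Mayer--Vietoris long exact sequence applied to any coherent sheaf reads
\[
\dots \to H^{p-1}(U_{I'} \cap (V_I \times F)) \to H^p(U_I) \to H^p(U_{I'}) \oplus H^p(V_I \times F) \to H^p(U_{I'} \cap (V_I \times F)) \to \dots.
\]
By the inductive hypothesis, $H^p(U_{I'}) = 0$ for $p > |I'|$, hence for $p > |I|$ since $|I'| \leq |I|$. Since $V_I \times F$ is affine hence Stein, $H^p(V_I \times F) = 0$ for $p > 0$. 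The inductive step will therefore be complete once we verify that $H^p(U_{I'} \cap (V_I \times F)) = 0$ for $p \geq |I|$.

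Inside the Stein space $V_I \times F$ the intersection $U_{I'} \cap (V_I \times F) = \bigcup_{J \leq I'} (V_I \cap V_J) \times F$ is the complement of the common zero set of the Plücker functions $\{d_J : J \leq I'\}$ pulled back from $V_I$. By Proposition \ref{stein}(3), the cohomology of such a complement vanishes above degree $N-1$ whenever the zero set can be cut out by $N$ holomorphic functions. The entire problem therefore reduces to the combinatorial claim that this common zero set on $V_I$ is already defined by only $|I|$ of the $d_J$. This combinatorial claim is the main obstacle.

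To establish it, I would use the ``single-replacement'' family: for each pair $(j, m)$ with $1 \leq j \leq l$, $m \notin I$, and $m < i_j$, take $J(j, m) = (I \setminus \{i_j\}) \cup \{m\}$; a direct Laplace expansion shows that on $V_I$ the corresponding minor equals $\pm z_{m, j}$, a coordinate function on the affine space $V_I$. The number of such pairs is $\sum_{j=1}^l (i_j - j) = |I|$, and $|J(j,m)| = |I| - (i_j - m) < |I|$ forces $J(j,m) \leq I'$. The key claim to verify is that simultaneous vanishing of these $|I|$ functions forces $d_J = 0$ for every $J < I$. For this, note that $J < I$ produces some index $t$ with $j_t < i_t$: if $|J| < |I|$ because $\sum_s j_s < \sum_s i_s$, and if $|J| = |I|$ with $J <_{\mathrm{lex}} I$ by the defining property of lex order. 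Under our vanishing hypothesis, every row of the $k \times l$ matrix $M$ at a position $\leq j_t$ is supported in the first $t-1$ columns: rows at positions $i_s \in I$ with $s < t$ are the basis vectors $e_s$, while a row at position $m \notin I$ with $m < i_t$ satisfies $z_{m, j} = 0$ for every $j \geq t$ (because then $i_j \geq i_t > m$). Hence the top $t$ rows of the $l \times l$ submatrix $M_J$ lie in a $(t-1)$-dimensional subspace, so they are linearly dependent and $\det M_J = 0$. Combined with Proposition \ref{stein}(3), this gives $H^p(U_{I'} \cap (V_I \times F)) = 0$ for $p \geq |I|$, and Mayer--Vietoris completes the induction.
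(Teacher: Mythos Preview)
Your proof is correct and follows essentially the same approach as the paper: induction along the total order, Mayer--Vietoris, and reduction via Proposition~\ref{stein}(3) to showing that the complement of $U_{I'}\cap(V_I\times F)$ in $V_I\times F$ is cut out by $|I|$ holomorphic functions. The paper isolates this last combinatorial step as a separate lemma (Lemma~\ref{equations}), parametrizing the very same single-replacement family $J=(I\setminus\{i_s\})\cup\{m\}$ slightly differently and arguing via abstract linear independence rather than in explicit affine coordinates on $V_I$, but the content is identical.
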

\begin{proof}
Following \cite{S}, we will use the Mayer-Vietoris exact sequence to prove the statement by induction.

It is clear that the statement is true for the smallest index
$I=\{1,\dots,l\}$. Namely, in that case $U_I=V_I\times F$ is an affine space, therefore it is Stein by Lemma \ref{stein}(1), and so the cohomology of all coherent holomorphic sheaves on $U_I$ vanishes in degrees above 0.

Assume now that the statement is true for the immediate predecessor $I'$ of $I$ and let us prove it for $I$. Denote $|I|=i\geq 1$; then $|I'|\leq i$. The Mayer-Vietoris exact sequence for a holomorphic sheaf $\caS$ on $U_I=U_{I'}\cup (V_I\times F)$ is
\begin{multline*}
\dots\to H^j(U_I,\caS)\to H^j(U_{I'},\caS)\oplus H^j(V_I\times F,\caS)\to 
H^j(U_{I'}\cap (V_I\times F),\caS)\to \\
\to H^{j+1}(U_I,\caS)\to H^{j+1}(U_{I'},\caS)\oplus H^{j+1}(V_I\times F,\caS)\to\dots
\end{multline*}
By the inductive assumption, if $j\geq i$, then
$H^{j+1}(U_{I'},\caS)\oplus H^{j+1}(V_I\times F,\caS)=0$. Therefore we will be done if we prove that $H^j(U_{I'}\cap (V_I\times F),\caS)=0$. This is true 
by Proposition \ref{stein}(3), because 
$V_I\times F$ is Stein, and by Lemma \ref{equations} below, $U_{I'}\cap (V_I\times F)$ is a subset of $V_I\times F$ with complement given by $|I|= i\leq j$ equations. 
\end{proof}

\begin{lem} 
\label{equations}

Let $w\in W$ and let $I=\{i_1,\dots,i_l\}$ be such that $d_I(\pi(w))\neq 0$. 

Assume that $d_J(\pi(w))=0$ for all $J$ of the form  
\[
J=\{i_1,\dots,i_{r-1},j_r,j_{r+1},\dots,j_l\}
\]
for some $r\in\{1,\dots,l\}$, with $i_{r-1}<j_r<i_r$ and 
\[
j_{r+1},\dots,j_l\in\{i_r,\dots,i_l\}.
\]
(By convention, $i_0=0$.)
 
Then $d_K(\pi(w))=0$ for all $K<I$. Furthermore, the number of $J$ as above is 
$|I|$.
\end{lem}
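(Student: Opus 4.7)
The plan is to bring $\pi(w)$ to a normalized form using the $GL(l,\bbC)$-action on homogeneous Stiefel coordinates, so that the hypothesis becomes a concrete vanishing statement about individual matrix entries; a short degree/counting argument then handles both claims without invoking any Pl\"ucker relations. Since $d_I(\pi(w))\neq 0$, one may choose a representative $k\times l$ matrix $Z$ for $\pi(w)$ with $Z_{i_s,t}=\delta_{s,t}$, so that the rows of $Z$ indexed by $i_1,\dots,i_l$ form the identity. One then computes $d_J(\pi(w))$ for each $J$ in the hypothesis: writing $J=\{i_1,\dots,i_{r-1}\}\cup\{j_r\}\cup(\{i_r,\dots,i_l\}\setminus\{i_t\})$ with $i_{r-1}<j_r<i_r$ and $t\in\{r,\dots,l\}$, every row of the associated $l\times l$ submatrix except the one at the slot of $Z_{j_r,*}$ is a standard basis vector $e_s$, and column $t$ is the unique column receiving no contribution from those basis rows. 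Expanding along column $t$ therefore yields $d_J(\pi(w))=\pm Z_{j_r,t}$. Consequently the hypothesis becomes: $Z_{j,t}=0$ whenever $j$ lies in a gap $(i_{r-1},i_r)$ for some $r\leq l$ and $t\geq r$; equivalently, the row $Z_{j,*}$ is supported in columns $\{1,\dots,r-1\}$ for such $j$, while rows with $j>i_l$ carry no constraint.

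For the main claim I would run a degree inequality. If $K=\{k_1<\dots<k_l\}$ has $d_K(\pi(w))\neq 0$, the Leibniz expansion of $\det(Z_{k_s,t})_{s,t}$ contains a nonvanishing monomial, so some permutation $\sigma$ of $\{1,\dots,l\}$ satisfies $Z_{k_s,\sigma(s)}\neq 0$ for every $s$. The normalization then forces the trichotomy: if $k_s=i_r$, then $\sigma(s)=r$, so $k_s=i_{\sigma(s)}$; if $k_s\notin I$ and $k_s<i_l$, lying in a gap $(i_{r-1},i_r)$, then $\sigma(s)<r$, and so $i_{\sigma(s)}\leq i_{r-1}<k_s$; if $k_s>i_l$, then trivially $i_{\sigma(s)}\leq i_l<k_s$. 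In every case $k_s\geq i_{\sigma(s)}$ with equality iff $k_s\in I$. Summing gives
\[
|K|-|I|=\sum_{s=1}^l\bigl(k_s-i_{\sigma(s)}\bigr)\geq 0,
\]
with equality iff every $k_s$ lies in $I$, i.e., iff $K=I$. Contrapositively, every $K\neq I$ with $|K|\leq|I|$ forces $d_K(\pi(w))=0$, and this covers exactly the case $K<I$ in the chosen total order.

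Finally, for the count, for each $r\in\{1,\dots,l\}$ there are $i_r-i_{r-1}-1$ choices of $j_r$ and $l-r+1$ choices of which $i_t$ to omit from $\{i_r,\dots,i_l\}$, so the total number of admissible $J$ is $\sum_{r=1}^l(i_r-i_{r-1}-1)(l-r+1)$. Splitting and reindexing collapses this sum to $\sum_r i_r-\tfrac{l(l+1)}{2}=|I|$. The main technical point I anticipate is the determinant bookkeeping producing $d_J=\pm Z_{j_r,t}$ and the careful identification of which entries of $Z$ are forced to vanish; once that dictionary is in place, the degree inequality is essentially automatic and the count is immediate.
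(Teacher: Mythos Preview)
Your proof is correct and the core idea coincides with the paper's: both arguments extract from the hypothesis that any row with index $j$ in a gap $(i_{r-1},i_r)$ lies in the span of $w_{i_1},\dots,w_{i_{r-1}}$. You reach this by normalizing so that rows $i_1,\dots,i_l$ form the identity and reading off $d_J=\pm Z_{j_r,t}$; the paper stays coordinate-free, noting that $w_j$ lies in the span of $\{w_{i_1},\dots,w_{i_{r-1}}\}$ together with any $l-r$ of $w_{i_r},\dots,w_{i_l}$, hence in the intersection of those spans. The only genuine difference is the concluding step. The paper takes the smallest $u$ with $k_u<i_u$ and observes that $w_{k_1},\dots,w_{k_u}$ then all lie in the $(u{-}1)$-dimensional span of $w_{i_1},\dots,w_{i_{u-1}}$, forcing $d_K=0$ by linear dependence; you instead pull a nonvanishing Leibniz monomial and sum the inequalities $k_s\geq i_{\sigma(s)}$ to get $|K|\geq|I|$. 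Your packaging is a touch slicker and in fact yields marginally more (vanishing for every $K\neq I$ with $|K|\leq|I|$, not only those with $K<I$ in the total order), but the underlying linear algebra is the same. The counting computation is identical in both proofs.
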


\begin{proof}
We first show that the number of possible $J$ is exactly $|I|$. For a fixed $r$, 
the number of choices for $J$ is $i_r-i_{r-1}-1$ (for $j_r$) times $l-r+1$ (for $j_{r+1},\dots,j_l$). The total number of choices is thus
\begin{multline*}
\sum_{r=1}^l(i_r-i_{r-1}-1)(l-r+1)= 
\sum_{r=1}^l i_r(l-r+1)-\sum_{r=1}^l i_{r-1}(l-r+1)-\sum_{r=1}^l(l-r+1)=\\
\sum_{r=1}^l i_r(l-r+1)-\sum_{s=0}^{l-1} i_s(l-s)-\sum_{s=1}^l s=
\sum_{r=1}^l i_r-\frac{l(l+1)}{2}=|I|.
\end{multline*}

The assumption $d_I(w)\neq 0$ means that the rows $w_{i_1},\dots,w_{i_l}$ of the
$k\times l$ matrix corresponding to $w$ are linearly independent. The assumption
$d_J(w)=0$ for all $J$ as above means the following. If $i_{t-1}<s<i_t$ for some
$t\in\{1,\dots,l\}$, then $d_J(w)=0$ for every 
$J=\{i_1,\dots,i_{t-1},s,i_{t+1}',\dots,i_l'\}$ such that $\{i_{t+1}',\dots,i_l'\}\subset \{i_t,\dots,i_l\}$ means that $w_s$ can be expressed as a linear combination of $w_{i_1},\dots,w_{i_{t-1}}$ and any $l-t$ vectors among $w_{i_t},\dots,w_{i_l}$. This implies that $w_s$ is a linear combination of $w_{i_1},\dots,w_{i_{t-1}}$. 

Let now
\[
K=(k_1,\dots,k_l)<I.
\]
Then there is some $u\in\{1,\dots,l\}$ such that $k_u<i_u$; let us take the smallest such $u$. By the above considerations, $w_{k_1},\dots,w_{k_u}$ are in the linear span of $w_{i_1},\dots,w_{i_{u-1}}$. This means $w_{k_1},\dots,w_{k_u}$ are linearly dependent, and therefore $d_K(w)=0$.
\end{proof}

\end{document}